\newtheorem{theorem}{Theorem}[section]
\newtheorem{lemma}[theorem]{Lemma}
\newtheorem{proposition}[theorem]{Proposition}
\newtheorem{corollary}[theorem]{Corollary}
\newenvironment{definition}[1][Definition.]{\begin{trivlist}
\item[\hskip \labelsep {\bfseries #1}]}{\end{trivlist}}
\newcommand{\x}[1]{\textrm{#1}}
\newcommand*{\qed}{\hfill\ensuremath{\square}}%
\newcommand{\im}{\textrm{im}}
\newcommand{\bp}{\begin{pmatrix}}
\newcommand{\ep}{\end{pmatrix}}
\newcommand{\bi}{\begin{itemize}}
\newcommand{\ei}{\end{itemize}}
\newcommand{\bc}{\begin{cases}}
\newcommand{\ec}{\end{cases}}
\newcommand{\ba}{\[\begin{aligned}}
\newcommand{\ea}{\end{aligned}\]}
\newcommand{\be}{\begin{enumerate}}
\newcommand{\ee}{\end{enumerate}}
\newcommand{\eqn}[2]{\begin{equation}\label{#2} #1\end{equation}}
\newcommand{\eq}[1]{\begin{align*}#1\end{align*}}
\newcommand{\eqq}[2]{\begin{align}\label{#2} \begin{split} #1 \end{split} \end{align}}
\newcommand{\sbq}{\subseteq}
\newcommand{\spq}{\supseteq}
\newcommand{\half}{\frac{1}{2}}
\newcommand{\R}{\mathds{R}}
\newcommand{\C}{\mathds{C}}
\newcommand{\N}{\mathds{N}}
\newcommand{\one}{\mathds{1}}
\newcommand{\I}{\mathds{I}}
\newcommand{\V}{\vspace{3mm}}
\newcommand{\bs}{\backslash}
\newcommand{\issa}{\\ = &}
\newcommand{\isen}{=&}
\newcommand{\ub}[2]{\underbrace{#1}_{#2}}
\newcommand{\haakk}[1]{\left[ #1 \right]}
\newcommand{\lar}{\left( \begin{array}}
\newcommand{\rar}{\end{array} \right)}
\newcommand{\mc}[1]{\mathcal{#1}}
\newcommand{\md}[1]{\mathds{#1}}
\newcommand{\fone}[1]{\frac{1}{#1}}
\newlength{\dhatheight}
\newcommand{\doublehat}[1]{%
    \settoheight{\dhatheight}{\ensuremath{\hat{#1}}}%
    \addtolength{\dhatheight}{-0.35ex}%
    \hat{\vphantom{\rule{1pt}{\dhatheight}}%
    \smash{\hat{#1}}}}
\title{\LARGE \bf
Stability Analysis of Networked Systems Containing Damped and Undamped Nodes*
}
\author{Filip Koerts$^{1}$, Mathias B{\"u}rger$^{2}$, Arjan van der Schaft$^{3}$, 
Claudio De Persis$^{4}$
\thanks{*This work was not supported by any organization}% <-this % stops a space
\thanks{$^{1}$ Filip Koerts is with Johann Bernoulli Institute for Mathematics and Computer Science, 	University of Groningen, The Netherlands
        {\tt\small f.j.koerts@rug.nl}}%
\thanks{$^{2}$ Mathias B{\"u}rger is researcher at Bosch, Stuttgart, Germany        
        {\tt\small mathias.buerger@ist.uni-stuttgart.de}}%
\thanks{$^{3}$ Arjan van der Schaft is with Johann Bernoulli Institute for Mathematics and Computer Science, 	University of Groningen, The Netherlands         
        {\tt\small a.j.van.der.schaft@rug.nl}}%
\thanks{$^{4}$ Claudio De Persis is with Engineering and Technology Institute, University of Groningen, The Netherlands
        {\tt\small c.de.persis@rug.nl}}%
}
\begin{document}

\maketitle
\thispagestyle{empty}
\pagestyle{empty}

%%%%%%%%%%%%%%%%%%%%%%%%%%%%%%%%%%%%%%%%%%%%%%%%%%%%%%%%%%%%%%%%%%%%%%%%%%%%%%%%
\begin{abstract}
This paper answers the question if a qualitatively heterogeneous passive networked system containing damped and undamped nodes shows consensus in the output of the nodes in the long run. While a standard Lyapunov analysis shows that the damped nodes will always converge to a steady-state value, the convergence of the undamped nodes is much more delicate and depends on the parameter values of the network as well as on the topology of the graph. A complete stability analysis is presented based on an eigenvector analysis involving the mass values and the topology of both the original graph and the reduced graph obtained by a Kron reduction that eliminates the damped nodes. 
\end{abstract}

\small 

{\bf \textit{Keywords---}}{\bf qualitatively heterogeneous networks; undamped nodes; Kron reduction; consensus dynamics}

\normalsize

%%%%%%%%%%%%%%%%%%%%%%%%%%%%%%%%%%%%%%%%%%%%%%%%%%%%%%%%%%%%%%%%%%%%%%%%%%%%%%%%
\section{Introduction}

One of the fundamental control problems related to network systems is the consensus or synchronization problem, where it is of interest to couple dynamical systems in such a way that they evolve asymptotically in an identical manner, see e.g. \cite{seyboth, grip}. Synchronization is a relevant stability-like property in numerous applications such as power systems, where frequencies of the power generators should be synchronized, which can be found in \cite{bergenhill}, or platooning vehicles, where the vehicles velocities should be synchronized, see e.g. \cite{olfati, traffic}.

Synchronization problems are particularly challenging if the individual systems are not identical but heterogeneous. There has been tremendous research on synchronization of heterogeneous systems (e.g., using dynamic coupling controllers) \cite{c2, c1, murguia}. A way do deal with heterogeneity in networks that is particularly relevant to this paper, is to exploit system properties such as passivity \cite{arcak, port2}. Roughly speaking, the conceptual idea is as follows. If all - possibly heterogeneous - nodes are strictly passive (w.r.t. the outputs used for coupling) and the couplings are passive then synchronization can be achieved. This conceptual idea is extremely powerful for the analysis of heterogeneous networks and extends in various directions.  E.g., passive networks are directly related to network optimization problems \cite{c1} and can exhibit complex dynamic behavior such as clustering \cite{c2}. Furthermore, passivity is also relevant in the analysis of power networks \cite{trip}. We study in this paper a basic class of passive networks, namely linear mass-spring-damper networks with constant external forces. While this model is simplified it captures many of the relevant properties of networks of passive systems as studied in \cite{c1,arcak,port2}. 

When considering synchronization, the relevant topological conditions on the network basically boil down to some form of connectivity notions. The relevance of the network topology becomes more evident if, e.g., the controllability of a network is analyzed (\cite{observability, rahmani}). When considering the whole network as one system, the controllability depends heavily on the topological location of the control inputs (i.e., the location of the controlled nodes) in the network. In \cite{rahmani}, the controllability of leader-follower consensus networks has been connected to the symmetry of the graph with respect to the control nodes (leaders). Similarly, the research direction of pinning control investigates the question, where to place a limited number of controllers in a network to achieve synchronization (see \cite{chen} for a survey).

In some sense, research on pinning control deals also with heterogeneous networks, while the heterogeneity is here qualitative (nodes are either controlled or uncontrolled). In fact, with such a qualitative heterogeneity in the network, the graph topology becomes highly important.

We study in this paper a class of passive networks with a qualitative heterogeneity. In particular, we consider mass-spring-type networks with many undamped and few damped nodes. This type of models might be applied to networks that contain a minority of nodes with damping constants being considerably higher than damping constants of other nodes. A natural approach would be to approximate damping values below some threshold value to zero. 

To study the convergence of the network, we use a Lyapunov function that is exactly the Lyapunov function used in \cite{c1}, \cite{arcak}, or \cite{port2}. However, as in this paper the network contains undamped nodes, convergence cannot be ensured with this Lyapunov function. In fact, depending on the location of the damped nodes, the network can exhibit oscillatory behavior. The main result of this paper is a set of fairly simple and easily verifiable graph theoretic conditions ensuring convergence. Our result has various  implications, which can be found in the concluding section. 

The remainder of the paper is organized as follows. The dynamical network model and the graph formalism is introduced in Section II.  System characteristics such as the network equilibrium and the ability to shift the equilibrium are covered in Section III. The convergence analysis is performed in Section IV, where first a Lyapunov analysis is presented, followed by a characterization of the invariant subspace, leading to the main result of the paper, a precise characterization of the convergence condition.

%%%%%%%%%%%%%%%%%%%%%%%%%%%%%%%%%%%%%%%%%%%%%%%%%%%%%%

\section{Preliminaries}

We consider an undirected and connected graph $\mc G=(V,E)$ with $n$ nodes and $m$ edges and incidence matrix $B$. On each node $i \in V$, a dynamical system $\Sigma_i$ is placed which is modeled as
\eqq{ 
\dot p_i = -R_iy_i + u_i + v_i, \qquad y_i = M_i^{-1} p_i
}{nodes}
where $p_i \in \R^{r}$ and  $y_i \in \R^{r}$ are the state and the output, respectively. Further, we have the damping matrix $R_i \succcurlyeq 0$, inertia matrix $M_i \succ 0$, coupling input $u_i$ and a constant external input $v_i \in \R^r$. On each edge $k=(i,j) \in E$, a dynamical controller $\Gamma_k$ of dimension $r$ is placed with state $q_k \in \R^{r}$, output $f_k \in \R^{r}$ and is modeled as:
\eqq{ 
\dot q_k = z_k, \qquad f_k \isen W_kq_k
}{edges}
Here, $W_k \succ 0$ is the edge weight matrix of edge $k$. Variables without subscript denote the corresponding stacked variables of the plants and controllers. The coupling is established through 
%and $K_k \in \R^{r \times r_k}$ is an actuator matrix of full column rank%
\eqq{
u = -(B \otimes I_{r}) f, \qquad z = (B \otimes I_{r})^T y
}{coupling}
where $\otimes$ denotes the Kronecker product and $I_r$ the $r \times r$ identity matrix.  In the sequel, we will use the abbreviated notation $\mathds{B}:= B \otimes I_{r_c}$.  

\begin{definition} \textit{A node $i \in V$ is said to be damped if $R_i \succ 0$. A node is undamped if $R_i =0$, while it is partially undamped if $R_i$ is singular. }
\end{definition} 

{\bf Assumption.} The set $V$ of nodes is partitioned into a set $V_d$ of damped nodes with cardinality $n_d \geq 1$ and a set $V_u$ of (partially) undamped nodes with cardinality $n_u \geq 1$. Hence, there is at least one damped and at least one (partially) undamped node.  

{\bf Remark.}  
In the context of a mass-spring-damper system, node states $p_i$ can be seen as momenta of the masses, while $q_k$ is the elongation of the springs. The node output $y_i$ represents velocities, whereas the controller output $f_k$ stands for the force acting on its endpoints. %Usually, mass-spring-damper systems satisfy $r_k=r$. 
%and are fully actuated with $K_k =I_r$. 

\subsection{Closed-loop system}

Let $p=col(p_1, \dots, p_n)$, $q=col(q_1, \dots, q_m)$ be the stacked state vectors and similarly for the other variables. Taking \eqref{nodes}, \eqref{edges} and \eqref{coupling} together, we obtain the closed-loop system, denoted by $\Sigma \times \Gamma$ and whose state and output is denoted by $z:=col(p,q) \in \R^{r(n+m)}$ and $y \in \R^{rn}$, respectively. Its state-space representation reads as $\dot z=Az+Gv$, $y=M^{-1}p$, where

\eqq{
\ub{\bp \dot p \\ \dot q \ep}{\dot z} \isen \ub{\bp -RM^{-1} & -\md BW \\ \md B^TM^{-1} & 0 \ep}{A} \ub{\bp p \\ q \ep}{z} + \ub{\bp I \\ 0 \ep}{G} v \\
y \isen \bp M^{-1} & 0 \ep \bp p \\ q \ep 
}{y}
%\eqq{
%\ub{\bp \dot p \\ \dot q \ep}{\dot z} \isen \ub{\bp -RM^{-1} & -\md BKW \\ K^T \md B^TM^{-1} & 0 \ep}{A} \ub{\bp p \\ q \ep}{z} + \ub{\bp v \\ 0 \ep}{b}  \\
%y \isen \bp M^{-1} & 0 \ep \bp p \\ q \ep 
%}{y}

The system parameters are:
\bi 
\item $M:=diag(M_1, \dots, M_n) \succ 0$, a block diagonal matrix containing inertia matrices of the individual nodes.  
\item $R:=diag(R_1, \dots, R_n) \succcurlyeq 0$, a block diagonal matrix with damping matrices of the individual nodes. 
\item $W:=diag(W_1, \dots W_m) \succ 0$, the block diagonal matrix with spring constants (edge weights). 
%\item $K:=diag(K_1, \dots K_m) \in \R^{rn \times r_cn}$, where $r_c=\sum_k r_k$, a block matrix of full column rank containing the actuator matrices. 
\item $v$, a constant external input 
\ei

{\bf Remark.} The system $\Sigma \times \Gamma$ can be written in a port-Hamiltonian representation by using the Hamiltonian $H(p,q)=\half p^TMp + \half q^TWq$. This gives $\dot H=v^Ty-y^TRy \leq v^Ty$, which shows that $\Sigma \times \Gamma$ is passive, but not output strictly passive as $R$ is singular. Hence, this does not give us the wanted convergence results and we need to invoke LaSalle's Theorem (section IV).
\subsection{Second-order dynamics}

Since $\mc G$ is connected, $\x{rank}(B)=n-1$. Furthermore, $\ker(B^T)=\im(\one)$, where $\one$ is the stacked vector of all ones. Also, $\ker\mathds{B}=\im(\I_r)$, where $\I_r:= \one \otimes I_r$. In fact, $\md B$ represents a graph consisting of $r$ connected components that are copies of $\mc G$. 

A fundamental cycle matrix $C$ of $\mc G$ is a matrix of full column rank that satisfies $\ker(B)=\im(C)$, see e.g. \cite{graphmatrices}\footnote{The fundamental cycle matrix in \cite{graphmatrices} is the transposed of the fundamental cycle matrix used in this paper}. The full column rank matrix $\mathds{C}:= C \otimes I_r$ satisfies $\ker(\mathds{B})=\im(\mathds{C})$. 

\begin{comment}
can be constructed as follows. Given a spanning tree $\mc G_t=(V,E_t)$ with $E_t \sbq E$, each edge $k \in E \bs E_t$ induces a unique cycle in $\mc G$, called a fundamental cycle, whose edges are in $k \cup E_t$. Each column of the corresponding fundamental cycle matrix $C$  corresponds to such a fundamental cycle and contains zeros except for the entries corresponding to edges contained in the fundamental cycle, where a '1' or a '-1' is placed in such a way that $BC=0$. It is easily checked that $C$ has full column rank.
\end{comment}

Note that since $\dot q \in \im(\mathds{B}^T)$, the projection of $q$ onto $\im(\mathds{B}^T)^\perp=\ker(\mathds B)=\im(\mathds C)$ can be written as $\mathds C r$ for some $r \in \R^{r(m-n+1)}$ and is such that $q(t) \in \im(\mathds{B}^T) + \mathds{C} r$ for all $t \geq 0$. By integrating the output $y$, we obtain potentials or positions $s(t):= \int_0^t y(\tau) d\tau+s_0$, where $s_0$ satisfies $q(0)=\mathds{B}^Ts_0+W^{-1}\mathds{C} r$. This decomposition is possible and unique. All terms in the equation $\dot p=-Ry -\mathds{B}Wq +v$ can be written in terms of (derivatives) of $s$ and the result is a second order equation: 
\eqq{
M \ddot s = -R \dot s - \mathds{B}W\mathds{B}^T s + v
}{sor}
%\eqq{
%M \ddot s = -R \dot s - \mathds{B}KWK^T\mathds{B}^T s + v
%}{sor}
We define $\md L:= \md B W \md B^T$ to be the total Laplacian matrix. Notice that this is indeed a Laplacian matrix: it is symmetric and the row and column sums are zero. In fact, $\ker(\md L)=\ker(\md L^T)=\im(\I_r)$. Some off-diagonal entries of $\md L$ are positive if and only if there are $W_k$'s with negative off-diagional entries. This does not affect the stability, since $W$ is positive-semidefinite (see section IV). 
%... if there are K_kW_kK_k^T with negative entries

\begin{comment}
Furthermore, since each column in $B_d^i$ includes exactly one non-zero entry, the dot product of two distinct rows of $K_d$ are zero and thus off-diagonal entries of $K_d$ are zero. Then from the fact that $L \one=0$ and $L_d \one=0$, it follows that $K_d \one=-L_i \one$. Taking both observations together we write $K_d=-diag(L_i \one)$. Analogously, $K_u=-diag(L_i^T \one)$. 
\end{comment}

\begin{comment}

Alternatively, we can decompose \eqref{y} to obtain

\begin{equation}\label{ssrdec} \frac{d}{dt} \bp p_d \\ p_u \\ q^d \\ q^i \\ q^a \ep  = \bp -R_dM_d^{-1} & 0 & -B_d^dD_d & -B_d^iD^i & 0 \\ 0 & 0 & 0 & -B_a^iD^i & -B_a^aD^a \\ (B_d^d)^TM_d^{-1} & 0 & 0 & 0 & 0 \\ (B_d^i)^TM_d^{-1} & (B_a^i)^TM_a^{-1} & 0 & 0 & 0 \\ 0 & (B_a^a)^TM_a^{-1} & 0 & 0 & 0 \ep  \bp p_d \\ p_u \\ q^d \\ q^i \\ q^a \ep + \bp c_d \\ c_u \\ 0 \\ 0 \\ 0  \ep \end{equation}

\end{comment}

\subsection{Decomposition of $\md B$ and $\md L$}

The partitioning $\{V_d , V_u\}$ of $V$ also induces a partitioning of the edges into the set $E^d$ of edges between damped nodes, the set $E^u$ of edges between undamped nodes and the set $E^i$ of interconnecting edges between a damped and an undamped node. We obtain $\mc G=(V_d \cup V_u, E^d \cup E^i \cup E^u)$ with partitioned greater incidence matrix
\eq{
\md B= \bp \md B_d \\ \md B_u \ep =  \bp \md B_d^d & \md B^i_d & 0 \\ 0 & \md B^i_u & \md B^u_u \ep 
}
Let the edge weight matrix $W$ and the total Laplacian matrix $\md L$ be correspondingly decomposed. Now, decompose \eqref{sor} into blocks associated with the damped nodes, with subscript $d$, and (partially) undamped nodes, with subscript $u$. %In this decomposition, $R_u$, the right bottom block of $R$ associated with the undamped nodes, is zero:
%Let the edge weight matrix $W$, actuator matrix $K$ and
\eqq{
 \underbrace{\bp M_d & 0 \\ 0 & M_u \ep}_{M} \bp \ddot s_d \\ \ddot s_u \ep =& - \underbrace{\bp R_d & 0 \\ 0 & R_u \ep}_{R} \bp \dot s_d \\ \dot s_u \ep \\ & \hspace{-10mm} - \underbrace{\bp \md L^d_d+\md L_d^i & \md L^i_i \\ (\md L^i_i)^T & \md L^u_u+\md L_u^i \ep}_{L} \bp s_d \\ s_u \ep + \bp v_d \\ v_u \ep
}{el}
$\md L^d_d=\md B_d^dW^d(\md B_d^d)^T$ and $\md L^u_u=\md B_u^uW^u(\md B_u^u)^T$ are the Laplacian matrices corresponding to the subgraphs $\mc G_d:=(V_d,E^d)$ with incidence matrix $\md B_d^d$ and $\mc G_u:=(V_u,E^u)$ with incidence matrix $\md B_u^u$, respectively. The Laplacian matrix 
$$ \md L^i := \bp \md L_d^i & \md L_i^i \\ (\md L_i^i)^T & \md L_u^i \ep$$ 
corresponds to the subgraph $\mc G_i:=(V,E^i)$. $\md L_i^i=\md B_d^iW^i(\md B_u^i)^T$ contains the edge weight matrices of the interconnecting edges. Finally, $\md L_d^i=\md B_d^iW^i(\md B_d^i)^T$ and $\md L_u^i=\md B_u^iW^i(\md B_u^i)^T$ are positive semi-definite block diagonal matrices since in $\mc G_i$ there are no edges between two damped or two undamped nodes. 
%$\md L^d_d=\md B_d^dK^dW^d(K^d)^T(\md B_d^d)^T$ and $\md L^u_u=\md B_u^uK^uW^u(K^u)^T(\md B_u^u)^T$ are the Laplacian matrices

%$\md L_i^i=\md B_d^iK^iW^i(K^i)^T(\md B_u^i)^T$ contains the edge weight matrices of the interconnecting edges. Finally, $\md L_d^i=\md B_d^iK^iW^i(K^i)^T(\md B_d^i)^T$ and $\md L_u^i=\md B_u^iK^iW^i(K^i)^T(\md B_u^i)^T$

In the sense of consensus dynamics, it is of interest to know whether the nodes show output consensus in the long run, that is, if $y$ converges to a point in $\im\{\I_r\}$ as $t \to \infty$. In case the system fails to show consensus, it is of interest to know the non-trivial steady-state behavior. From that, we derive useful information such as the degrees of freedom of the nodes at steady state. Therefore, we ask ourselves in this paper: 

\vspace{1mm}

{\bf Problem.} \textit{Does every plant output trajectory $y(t)$ of $\Sigma \times  \Gamma$, i.e. system \eqref{y}, converges to a point in the set $\im\{\I_r\}$? If not, what is the steady-state behavior of $\Sigma \times \Gamma$?}
% Is $\mc G$ parameter independent globally asymptotically stable (PI-GAS), i.e.
%If $\mc G$ is not PI-GAS, we say that $\mc G$ allows oscillatory behavior.}

%In the sense of consensus dynamics, it is of interest to know whether the nodes show output consensus in the long run, that is, if $y$ converges to a point in $\im\{\I_r\}$ as $t \to \infty$. In case the system fails to show consensus, it is of interest to know the non-trivial steady-state behavior. This allows us to derive useful information such as the degrees of freedom of the nodes at steady state. Therefore, we ask ourselves

\section{System characteristics} 

In this section, we give the equilibria and perform a shift so that the equilibrium is located at the origin. Note first that the affine subspaces of  $\mathds{R}^{rm} \backslash \im(\mathds{B}^T)$ are invariant under the dynamics of the controller state $q(t)$. We have the following characterization of $\mathds{B}$ and $\mathds{C}$: 

\begin{lemma}\label{bandc} $\im(\mathds{B}^T) \oplus  \im(W^{-1}\mathds{C}) = \mathds{R}^m$ and $\im(\mathds{B}^T) \cap  \im(W^{-1}\mathds{C}) = \{0\}$. 
\end{lemma}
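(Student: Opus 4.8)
The plan is to reduce both assertions to one orthogonality computation plus a dimension count; the statement should be read in $\R^{rm}$, the space in which the controller state $q$ lives. First I would record the relevant dimensions. Since $\mc G$ is connected, $\x{rank}(B)=n-1$, so $\dim\im(\md B^T)=\x{rank}(\md B)=r(n-1)$. On the other hand $C$ has full column rank $m-n+1$, so $\md C$ has full column rank $r(m-n+1)$, and since $W^{-1}$ is invertible $W^{-1}\md C$ has full column rank $r(m-n+1)$ as well; thus $\dim\im(W^{-1}\md C)=r(m-n+1)$. These two numbers sum to $rm$, so once I show the two subspaces intersect trivially the direct-sum decomposition onto all of $\R^{rm}$ is automatic.

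The cleanest way to obtain the trivial intersection — and in fact the whole statement in one shot — is to equip $\R^{rm}$ with the $W$-weighted inner product $\langle a,b\rangle_W := a^T W b$, which is a genuine inner product because $W \succ 0$. For any $x$ and $y$ of the appropriate sizes,
\[ \langle \md B^T x,\ W^{-1}\md C y\rangle_W = (\md B^T x)^T W\, W^{-1} \md C y = x^T \md B\md C\, y = 0, \]
the last equality being $\md B\md C=0$, i.e. $\im(\md C)=\ker(\md B)$, which was established in Section II. Hence $\im(\md B^T)$ and $\im(W^{-1}\md C)$ are orthogonal in $(\R^{rm},\langle\cdot,\cdot\rangle_W)$; since their dimensions add up to $rm$, they are orthogonal complements there, which yields both $\im(\md B^T)\cap\im(W^{-1}\md C)=\{0\}$ and $\im(\md B^T)\oplus\im(W^{-1}\md C)=\R^{rm}$.

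If one would rather not leave the standard inner product, the trivial intersection can be shown by hand: suppose $\md B^T x = W^{-1}\md C y$. Multiplying by $W$ gives $W\md B^T x = \md C y\in\im(\md C)=\ker(\md B)$, hence $\md L x = \md B W\md B^T x = 0$ and therefore $0 = x^T\md L x = (\md B^T x)^T W(\md B^T x)$. Because $W\succ 0$ this forces $\md B^T x=0$, so $W^{-1}\md C y=0$, and since $W^{-1}$ and $\md C$ are both injective, $y=0$; the direct-sum claim then follows from the dimension count as above. I do not anticipate a genuine obstacle: the only points that need care are identifying the ambient space correctly as $\R^{rm}$ and transporting the rank statements for $B$ and $C$ through the Kronecker product. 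The one idea worth singling out is the use of the $W$-weighted inner product, which turns ``complementary subspaces'' into ``orthogonal complements'' and thereby disposes of both parts simultaneously.
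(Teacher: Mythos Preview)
Your argument is correct, and your reading of the ambient space as $\R^{rm}$ is the right one (the $\R^m$ in the statement is a typo). Your alternative argument, which rewrites $\im(W^{-1}\md C)$ as $\ker(\md BW)$ and then uses $W\succ 0$ to force $\md B^T x=0$ whenever $\md BW\md B^T x=0$, is exactly the paper's proof spelled out in detail; the paper records only the identity $\im(\md B^T)\cap\im(W^{-1}\md C)=\im(\md B^T)\cap\ker(\md BW)=\{0\}$ and leaves the dimension count implicit. Your primary argument via the $W$-weighted inner product is a different and somewhat cleaner packaging: by showing $\langle \md B^T x,\,W^{-1}\md C y\rangle_W=x^T\md B\md C y=0$ you get orthogonal complementarity in one stroke, whereas the paper treats the span and the trivial intersection as two separate observations. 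Both approaches ultimately rest on the same two ingredients, $\md B\md C=0$ and $W\succ 0$; the weighted inner product just makes their combination transparent.
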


\begin{proof} The first statement follows easily from $\im(\mathds{B}^T)^\perp = \ker(\mathds{B}) = \im(\mathds{C})$ and $W^{-1}$ is positive definite. The second statement holds since $\im(\mathds{B}^T) \cap \im(W^{-1} \mathds{C}) = \im(\mathds{B^T}) \cap \ker(\mathds{B}W)=\{0\}$. 
\end{proof}

As a result, the set of solutions $z(t)=col(p(t),q(t))$ of $\Sigma \times \Gamma$ where $q(t)$ is in one of the affine subspaces of $\mathds{R}^{rm} \backslash \im(\mathds{B}^T)$ is a shifted copy of those solutions of $\Sigma \times \Gamma$ where $q(t) \in \im(\md B^T)$. Consequently, without loss of generality we can assume that $q \in \im(\md B^T)$.

\begin{corollary}\label{copy} For every initial condition $z(0)=col(p(0), q(0))\in \mathds{R}^{r(n+m)}$, there exists a unique vector $\gamma \in \R^{r(m-n+1)}$ such that $q^*(0) := q(0) - W^{-1}\mathds{C} \gamma \in \im(\mathds{B}^T)$. Furthermore, with shifted initial conditions $z^*(0)=col(p(0), q^*(0))$, the trajectory difference $z^*(t) - z(t)$ is constant for all $t \geq 0$. 
\end{corollary}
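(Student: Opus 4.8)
The two assertions are handled separately. The existence and uniqueness of $\gamma$ is an immediate application of Lemma~\ref{bandc}: that lemma supplies the direct-sum decomposition $\im(\mathds{B}^T)\oplus\im(W^{-1}\mathds{C})=\R^{rm}$, so $q(0)$ admits a unique decomposition $q(0)=a+W^{-1}\mathds{C}\gamma$ with $a\in\im(\mathds{B}^T)$. Setting $q^*(0):=a=q(0)-W^{-1}\mathds{C}\gamma$ then places $q^*(0)$ in $\im(\mathds{B}^T)$ by construction. To upgrade uniqueness of the summand $W^{-1}\mathds{C}\gamma$ to uniqueness of the vector $\gamma$ itself, I would use that $\mathds{C}=C\otimes I_r$ has full column rank and $W^{-1}$ is nonsingular, hence $W^{-1}\mathds{C}$ is injective and $\gamma$ is determined by $W^{-1}\mathds{C}\gamma$.

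For the second assertion, put $\delta(t):=z^*(t)-z(t)$. Since $z$ and $z^*$ both obey $\dot z=Az+Gv$ with the same constant $v$, the forcing term cancels and $\delta$ solves the homogeneous system $\dot\delta=A\delta$ with $\delta(0)=\mathrm{col}(0,-W^{-1}\mathds{C}\gamma)$. The crux is to check that $\delta(0)\in\ker A$: reading off $A$ from \eqref{y}, the top block of $A\delta(0)$ equals $(-\mathds{B}W)(-W^{-1}\mathds{C}\gamma)=\mathds{B}\mathds{C}\gamma=0$ because $\im(\mathds{C})=\ker(\mathds{B})$, and the bottom block is $\mathds{B}^TM^{-1}\cdot 0=0$. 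Consequently $A^k\delta(0)=0$ for all $k\ge 1$, so $e^{At}\delta(0)=\delta(0)$, and therefore $z^*(t)-z(t)=e^{At}\bigl(z^*(0)-z(0)\bigr)=\delta(0)$ for all $t\ge 0$, which is the claimed constancy.

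I do not expect a genuine difficulty: the argument reduces to the single identity $\mathds{B}\mathds{C}=0$ recorded just before Lemma~\ref{bandc}, together with Lemma~\ref{bandc} itself. The only points deserving care are stating uniqueness at the level of $\gamma$ (which needs the full column rank of $\mathds{C}$) and making it explicit that ``constant for all $t$'', rather than merely $\dot\delta(0)=0$, follows from $A\delta(0)=0$ via the power series $e^{At}\delta(0)=\sum_{k\ge 0}\frac{t^k}{k!}A^k\delta(0)$, in which every term with $k\ge 1$ vanishes.
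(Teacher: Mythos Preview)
Your proposal is correct and follows essentially the same route as the paper: both parts hinge on Lemma~\ref{bandc} and the identity $\mathds{B}\mathds{C}=0$. The paper's proof of the second assertion computes $\dot p^*(t)-\dot p(t)=0$ and $\dot q^*(t)-\dot q(t)=0$ directly from the dynamics assuming the difference has the form $\mathrm{col}(0,W^{-1}\mathds{C}\gamma)$, which is your $A\delta(0)=0$ computation phrased as an invariance statement; your formulation via $e^{At}\delta(0)=\delta(0)$ is a slightly more explicit way of turning that into constancy for all $t$, and your remark on the injectivity of $W^{-1}\mathds{C}$ makes the uniqueness of $\gamma$ (rather than of the summand $W^{-1}\mathds{C}\gamma$) more transparent than the paper does.
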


\begin{proof} Existence and uniqueness of $\gamma$ follow from Lemma \ref{bandc}

%Suppose that $q^*_i(0) := q(0) - W^{-1}\mathds{C} \gamma_i \in \im(\mathds{B}^T)$ for $i=1,2$. Then $q^*_1(0) -  q^*_2(0) = -W^{-1}\mathds{C} (\gamma_1- \gamma_2) \in \im(\mathds{B}^T)$. Thus, $\mathds{C}^TW^{-1}\mathds{C}(\gamma_1-\gamma_2) \in \im(\mathds{C}^T\mathds{B}^T) = \{ 0 \}$. As $\mathds{C}$ has full column rank, $\mathds{C}^TW^{-1}\mathds{C}$ is nonsingular and thus $\gamma_1=\gamma_2$. 

Given that at some time $t \geq 0$, $p^*(t)=p(t)$ and $q^*(t)-q(t) \in \im(W^{-1}\mathds{C})$, we have $y^*(t)=y(t)$ and $\mathds{B}W(q^*(t)-q(t))=0$, hence $\dot p^*(t) -\dot p(t) = 0$. Also, $\dot q^*(t) - \dot q(t)= 0$.
\end{proof}

If we restrict $q$ to be in the invariant space of $\im(\md B^T)$, the system $\Sigma \times \Gamma$ has a unique equilibrium:

\begin{proposition}
The system $\Sigma \times \Gamma$ restricted to $q \in \im(\mathds{B}^T)$ has a unique equilibrium $\bar z = col(\bar p, \bar q)$ satisfying $\bar p=M \I_r \beta$ and $\{ \bar q \} = \haakk{W^{-1}\md B^\dagger(-R \I_r \beta +v) + \im(W^{-1}\md C)} \cap \im(\md B^T)$.\footnote{Here, $\dagger$ denotes the Moore-Penrose pseudoinverse.} In these expressions, $\beta$ is given by
\eqq{ \beta= ( \mathds{I}_r^T R \mathds{I}_r)^{-1} \mathds{I}_r^Tv
}{beta}
\end{proposition}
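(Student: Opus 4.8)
The plan is to impose the equilibrium conditions $\dot p = 0$ and $\dot q = 0$ directly on the state-space representation \eqref{y} and solve the resulting linear system in two stages, first for $(\bar p, \beta)$ and then for $\bar q$. From $\dot q = 0$ we get $\md B^T M^{-1}\bar p = 0$, so $M^{-1}\bar p \in \ker(\md B^T) = \im(\I_r)$ and therefore $\bar p = M\I_r\beta$ for some $\beta \in \R^r$; this $\beta$ is uniquely determined by $\bar p$ since $M$ is invertible and $\I_r$ has full column rank. Substituting into $\dot p = 0$ yields $\md B W \bar q = -R\I_r\beta + v$, and for this to admit a solution $\bar q$ the right-hand side must lie in $\im(\md B) = \ker(\md B^T)^\perp = \im(\I_r)^\perp$, i.e. $\I_r^T(-R\I_r\beta + v) = 0$, which is precisely $\I_r^T R \I_r\,\beta = \I_r^T v$.

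Next I would show that $\I_r^T R \I_r$ is invertible, which pins down $\beta$ to the expression in \eqref{beta} and hence makes $\bar p = M\I_r\beta$ unique. Using the Kronecker structure, $\I_r^T R \I_r = (\one \otimes I_r)^T\,\mathrm{diag}(R_1,\dots,R_n)\,(\one \otimes I_r) = \sum_{i \in V} R_i \succcurlyeq 0$, and by the standing Assumption at least one term (any damped node, and $n_d \geq 1$) is positive definite, so the sum is positive definite. This is the step I expect to be the main (if modest) obstacle: one must keep the Kronecker bookkeeping clean and explicitly invoke the assumption that there is at least one damped node, since that is exactly what guarantees a well-defined equilibrium.

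Finally, with $\beta$ fixed, set $w := -R\I_r\beta + v$; since $w \in \im(\md B)$, the vector $\md B^\dagger w$ is a particular solution of $\md B x = w$, and the full solution set is $\md B^\dagger w + \ker(\md B) = \md B^\dagger w + \im(\md C)$. Writing $x = W\bar q$ gives $\bar q \in W^{-1}\md B^\dagger w + \im(W^{-1}\md C)$. Imposing the restriction $\bar q \in \im(\md B^T)$ then selects a single point from this affine subspace: by Lemma \ref{bandc}, $\im(\md B^T) \oplus \im(W^{-1}\md C) = \R^{rm}$ with $\im(\md B^T) \cap \im(W^{-1}\md C) = \{0\}$, so the translate $W^{-1}\md B^\dagger w + \im(W^{-1}\md C)$ meets $\im(\md B^T)$ in exactly one point. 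This delivers both existence and uniqueness of $\bar q$ together with the stated intersection formula, completing the argument.
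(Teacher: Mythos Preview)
Your proposal is correct and follows essentially the same route as the paper's proof: derive $\bar p = M\I_r\beta$ from $\dot q = 0$, obtain $\beta$ from the solvability condition $\I_r^T(-R\I_r\beta+v)=0$ using invertibility of $\I_r^T R\I_r$, and then pin down $\bar q$ via the pseudoinverse and Lemma~\ref{bandc}. Your explicit Kronecker computation $\I_r^T R\I_r = \sum_{i\in V} R_i$ is a welcome clarification the paper leaves implicit, but otherwise the arguments coincide.
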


%Denote the set of equilibrium points of $\Sigma \times \Gamma$ with $q(0) \in \im(\mathds{B}^T)$ by $\bar z=col(\bar p, \bar q)$, where $\bar q \in \im(\mathds{B}^T)$. Bars on top of the variables refer to equilibrium values. We show that $\bar z$ is a sinqleton, which shows that the equilibrium point exists and is unique. 

\begin{proof} From $\dot{\bar q}=0$, we obtain $\mathds{B}^TM^{-1}\bar p=0$. Since for connected graphs, $\ker(\mathds{B}^T)=\im \{ \I_r \}$, it follows that $\bar p \in \im \{ M \I_r \}$. Write $\bar p= M \I_r \beta$ for some $\beta \in \mathbb{R}^r$. The value of $\beta$ can be obtained by setting $\dot{\bar p}=0$, which gives $-R \I_r \beta + v \in \im(\md B)$ and consequently $-\I_r^TR \I_r \beta+ \I_r^T v \in \im(\I_r^T\md B)=\{0\}$. Since $R_i \succ 0$ for at least one $i \in V$, it follows that $\I_r^TR \I_r \succ 0$ and thus $\beta$ can be given uniquely as in \eqref{beta}. Substituting this result in the dynamics of $p$, we obtain $\md BW \bar q = -R \I_r \beta +v$, which gives $\bar q \in W^{-1}\md B^\dagger (-R \I_r \beta + v) + \im(W^{-1} \md C)$, with $\im(W^{-1} \md C) = \ker( \md B W)$. By assumption and by the dynamics of $q$, $\bar q \in \im(\md B^T)$. The intersection of both sets is a singleton by Lemma \ref{bandc}. 
\end{proof}

{\bf Remark.} The unique equilibrium point for $q(0) \in \im(\md B^T)$ corresponds to a state of output consensus since  $\bar v = M^{-1}\bar p \in \im\{ \I_r \}$. Also for $q(0) \notin \im(\md B^T)$, it is shown readily that $\bar v = \I_r \beta$ with $\beta$ being given by \eqref{beta}.

\subsection{Shifted model}

Now, we introduce shifted state variables so that the equilibrium coincides with the origin. The main benefit of doing this is that it allows to use common techniques to show output consensus in section 4.2. Besides that, we get rid of the constant input $v$ in the dynamics. Define $\tilde p(t)= p(t) - \bar p$, $\tilde q(t) =q(t) - \bar q$. Stack these together in the state vector $\tilde z=col(\tilde p, \; \tilde q)$ and define the output $\tilde y=M^{-1}p$, then we obtain the linear time-invariant (LTI) closed-loop system
\eqq{
\ub{\bp \dot{\tilde p} \\ \dot{\tilde q} \ep}{\dot{\tilde z}} \isen \ub{\bp -RM^{-1} & -\md BW \\ \md B^TM^{-1} & 0 \ep}{A} \ub{\bp \tilde p \\ \tilde q \ep}{\tilde z}  \\
\tilde y \isen \bp M^{-1} & 0 \ep \bp \tilde p \\ \tilde q \ep 
}{y2}

\begin{comment}
\eqq{
\dot{\tilde p} \isen %\dot p \isen -RM^{-1}p - \md BWq + v \issa
%-R(M^{-1}\tilde p + \I_r \beta) - \md BW( \tilde q + \bar q) + v \issa
-RM^{-1} \tilde p - \md BW \tilde q \\
\dot{\tilde q} \isen \md B^T M^{-1} \tilde p
}{zogaatdat}
\end{comment}
%The last equation follows from the fact that $\dot{\bar p}=0$: 
%$$-R \I_r \beta - \md BW \bar q+v=0$$ 
Since $\bar q \in \im(\md B^T)$, it follows that $q(t) \in \im(\md B^T)$ if and only if $\tilde q(t) \in \im(\md B^T)$. By assumption, $\Sigma \times \Gamma$ is only defined on the invariant subspace $$\Omega = \{ col(\tilde p, \; \tilde q) \in \R^{r(n+m)} \mid  \tilde q \in \im(\md B^T) \}$$
% $$\tilde{\mathcal{B}} := \{ \tilde z(\cdot) \mid \dot{\tilde  z}= A \tilde z, \; \tilde q \in \im(\md B^T) \}$$
\begin{comment}
plant output trajectories $$\tilde{\mathcal{B}}_{v} :=\left\{ \tilde v(\cdot) \mid \exists \tilde q(\cdot) : \; \bp M \tilde v(\cdot) \\ \tilde q(\cdot) \ep \in \tilde{\mathcal{B}} \right\}$$ 
\end{comment}
System \eqref{y2} defined on $\Omega$ has a unique equilibrium point at $\bar{\tilde z}=0$. Similarly to the procedure in section IIA, we can introduce variables $\tilde s(t) = col(\tilde s_d(t), \tilde s_u(t)) := \int \tilde y(\tau) d\tau + \tilde s_0$, where $\tilde s_0$ is such that $\tilde q(0)=\md B^T \tilde s_0$, to obtain the second-order equation $$M\ddot{\tilde s}=-R\dot{\tilde s}- \md L{\tilde s}$$ 
Noting that $\tilde q(t)=\md B^T \tilde s(t)$ and $\tilde p(t)=M \dot{\tilde s}(t)$, \eqref{y2} can be written equivalently as an LTI system with states $\tilde s$ and $\tilde y$. In the next section we find Lemma \ref{qook} that connects global asymptotic stability of \eqref{y2} defined on $\Omega$ with output consensus of \eqref{y}.

%\begin{comment} The coordination shift induces a new stacked plant system $\tilde \Sigma$ :
%$$ \dot{\tilde p} = -R \tilde v- \md B \tilde f, \qq  \tilde y = M^{-1} \tilde p $$
%and a stacked controller system $\tilde \Gamma$:
%$$\dot{\tilde q} = \md B^T \tilde y, \qq \tilde f = W \tilde q$$
%\begin{comment}
%and controllers $\tilde \Gamma$, which are given by 
%\eqq{
%\tilde \Sigma:  \qq \qq \qq \dot{\tilde p} \isen -R \tilde v- \md B \tilde f \\
%\tilde y \isen M^{-1} \tilde p \\
%\\
%\tilde \Gamma: \qq \qq \qq \dot{\tilde q} \isen \md B^T \tilde y \qq \qq \qq  (\tilde q(0) \in \im(\md B^T)) \\
%\tilde f \isen W \tilde q
%}{zoekantoek}
%\end{comment}
%Define states $\tilde z:=col( \tilde p, \tilde q)$, then the closed-loop system $\tilde \Sigma \times \tilde \Gamma$ is a linear time-invariant (LTI) system given by
%\eqq{
%\dot{\tilde z} \isen A \tilde z \\
%\tilde y \isen M^{-1} \tilde p
%}{y2}
%where $A$ is given in \eqref{y}. 
%\end{comment}

\begin{comment}
The set $\bar{\tilde S}$ of corresponding equilibrium positions is given by $\bar{\tilde S}:=\bar S-r-\one\beta t=(\im\{\one\}+r)-r-\one\beta t=\im\{\one\}$.
\end{comment}

%%%%%%%%%%%%%%%%%%%%%%%%%%%%%%%%%%%%%%%%%%%%
%%%%%%%%%%%%%%%%%%%%%%%%%%%%%%%%%%%%%%%%%%%%
%%%%%%%%%%%%%%%%%%%%%%%%%%%%%%%%%%%%%%%%%%%%

\section{Steady-state behavior}

In this section, we determine the long-run behavior of \eqref{y2} defined on $\Omega$ by performing a common Lyapunov analysis. This allows us to derive the set of points to which all solutions converge. To find necessary conditions for the steady-state behavior, we use as Lyapunov function the Hamiltonian function that has a minimum at the equilibrium point $(\bar{\tilde p}, \bar{\tilde q})=(0,0)$:
$$U(\tilde p,\tilde q)=\frac{1}{2} \tilde p^TM^{-1}\tilde p+ \frac{1}{2} \tilde q^TW\tilde q $$

The time derivative of $U(\tilde p, \tilde q)$ now reads as
\begin{align*}
\dot U(\tilde p,\tilde q) &= \dot{\tilde p}^TM^{-1}\tilde p+ \dot{\tilde q}^TW\tilde q \\
&= (-RM^{-1} \tilde p- \md BW\tilde q)^T   M^{-1}{\tilde p} +  {\tilde p}^T M^{-1} \md BW\tilde q  \\
&= -\bp  {\tilde p}_d^T & {\tilde p}_u^T \ep M^{-1} \bp R_d \\ & R_u \ep M^{-1} \bp  {\tilde p}_d \\ {\tilde p}_u \ep \\
&=- {\tilde p}_d^T M_d^{-1}R_dM_d^{-1}  {\tilde p}_d - \tilde p_u^T M_u^{-1} R_u M_u^{-1} \tilde p_u 
\end{align*}

\begin{comment}
\begin{align*}
\dot U(\tilde p,\tilde q) &= \dot{\tilde p}^TM^{-1}\tilde p+ \dot{\tilde q}^TW\tilde q \\
&= (-R \dot{\tilde s}- BW\tilde q)^T   \dot{\tilde s} +  \dot{\tilde s}^T BW\tilde q  \\
&= -\bp \dot {\tilde s}_d^T & \dot{\tilde s}_u^T \ep \bp R_d \\ & 0 \ep \bp \dot {\tilde s}_d \\ \dot{\tilde p}_u \ep \\
&=-\dot {\tilde s}_d^T R_d \dot {\tilde s}_d 
\end{align*}
\end{comment}

From the fact that $M$ and $W$ are positive definite, $U$ is a postive-definite function for $(\tilde p,\tilde q) \neq (0,0)$, while $\dot U$ is negative semi-definite, $U$ is a suitable Lyapunov function. 

\begin{lemma}\label{stable} The system $\dot{\tilde z}= A \tilde z$ as defined in \eqref{y2}, is stable. \end{lemma}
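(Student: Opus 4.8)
The plan is to exhibit a Lyapunov function for the LTI system $\dot{\tilde z}=A\tilde z$ and invoke the standard Lyapunov stability theorem (stability in the sense of Lyapunov, not asymptotic stability). The natural candidate is exactly the Hamiltonian $U(\tilde p,\tilde q)=\tfrac12\tilde p^TM^{-1}\tilde p+\tfrac12\tilde q^TW\tilde q$ that has just been introduced, since the paper has already computed $\dot U(\tilde p,\tilde q)=-\tilde p_d^TM_d^{-1}R_dM_d^{-1}\tilde p_d-\tilde p_u^TM_u^{-1}R_uM_u^{-1}\tilde p_u\le 0$.

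First I would note that $M\succ0$ and $W\succ0$ imply that $U$ is a positive definite quadratic form on the whole state space $\R^{r(n+m)}$ (in particular on the invariant subspace $\Omega$), radially unbounded, with $U(0)=0$. Then, since $R=\mathrm{diag}(R_d,R_u)\succcurlyeq0$, each summand in $\dot U$ is of the form $w^T(M_i^{-1}R_iM_i^{-1})w$ with $M_i^{-1}R_iM_i^{-1}\succcurlyeq0$, so $\dot U\le0$ everywhere. These two facts — $U$ positive definite, $\dot U$ negative semidefinite along trajectories of $\dot{\tilde z}=A\tilde z$ — are precisely the hypotheses of the Lyapunov stability theorem for linear (or general) systems, which yields stability in the sense of Lyapunov: for every $\varepsilon>0$ there is $\delta>0$ such that $\|\tilde z(0)\|<\delta$ implies $\|\tilde z(t)\|<\varepsilon$ for all $t\ge0$. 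Equivalently, one can phrase it directly: $U(\tilde z(t))\le U(\tilde z(0))$ for all $t\ge0$, and since the sublevel sets $\{U\le c\}$ are bounded ellipsoids, every trajectory stays bounded, which for a linear system is the definition of (marginal/Lyapunov) stability.

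The argument is essentially complete at that point; there is no real obstacle, only the bookkeeping of making sure the claim being proved is stability in the Lyapunov sense and not asymptotic stability — the latter is false here precisely because $R$ is singular, so $\dot U$ vanishes on the nontrivial set where $\tilde p_d=0$ and $M_u^{-1}\tilde p_u\in\ker R_u$, and indeed the subsequent LaSalle analysis in the paper is devoted to characterizing that set. So the only subtlety is to resist proving too much: I would state explicitly that $\dot U\le0$ gives boundedness of all trajectories and hence stability, and defer any convergence discussion to the LaSalle argument that follows. I would close by remarking that the computation of $\dot U$ has already been carried out above, so the proof reduces to: $U>0$ for $\tilde z\ne0$, $U(0)=0$, $\dot U\le0$, therefore by the Lyapunov stability theorem the system is stable.

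\begin{proof}
Consider the quadratic form $U(\tilde p,\tilde q)=\tfrac12\tilde p^TM^{-1}\tilde p+\tfrac12\tilde q^TW\tilde q$ computed above. Since $M\succ0$ and $W\succ0$, $U$ is positive definite: $U(\tilde z)>0$ for all $\tilde z=col(\tilde p,\tilde q)\ne0$ and $U(0)=0$, and its sublevel sets $\{\tilde z:U(\tilde z)\le c\}$ are bounded. The time derivative along trajectories of $\dot{\tilde z}=A\tilde z$ was shown above to be
\[
\dot U(\tilde p,\tilde q)=-\tilde p_d^TM_d^{-1}R_dM_d^{-1}\tilde p_d-\tilde p_u^TM_u^{-1}R_uM_u^{-1}\tilde p_u.
\]
Because $R_d\succcurlyeq0$ and $R_u\succcurlyeq0$, the matrices $M_d^{-1}R_dM_d^{-1}$ and $M_u^{-1}R_uM_u^{-1}$ are positive semi-definite, so $\dot U(\tilde p,\tilde q)\le0$ for all $(\tilde p,\tilde q)$. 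Hence $U(\tilde z(t))\le U(\tilde z(0))$ for all $t\ge0$, so every trajectory remains in the bounded sublevel set $\{U\le U(\tilde z(0))\}$. By the Lyapunov stability theorem, the equilibrium $\tilde z=0$ of the linear system $\dot{\tilde z}=A\tilde z$ is stable in the sense of Lyapunov.
\end{proof}
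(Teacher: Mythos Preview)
Your proof is correct and follows essentially the same approach as the paper: both use the Hamiltonian $U$ as a Lyapunov function, observe that $\dot U\le 0$, and conclude boundedness of trajectories. The only cosmetic difference is that the paper, instead of invoking the Lyapunov stability theorem by name, writes out the explicit eigenvalue bound $\|\tilde z(t)\|_2\le\sqrt{\lambda_{\max}(\tilde M)/\lambda_{\min}(\tilde M)}\,\|\tilde z(0)\|_2$ with $\tilde M=\mathrm{diag}(M^{-1},W)$, which is precisely the quantitative content of the sublevel-set argument you sketched.
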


\begin{proof} Since $\dot U$ is everywhere nonpositive on $\R^{r(n+m)}$, we deduce that $\tilde z(t)^T \tilde M \tilde z(t) \leq \tilde z(0)^T \tilde M \tilde z(0)$ for all $t \geq 0$, where $\tilde M:=diag(M^{-1},W) \succ 0$. Noting that $\lambda_{min}(\tilde M)>0$, the min-max theorem yields $\| \tilde z(t) \|_2 \leq \sqrt{\lambda_{max}(\tilde M)/\lambda_{min}(\tilde M)} \| \tilde z(0)\|_2 $ for all $t \geq 0$.
\end{proof}

\begin{lemma}\label{qook}
Every trajectory $y(t)$ of $\Sigma \times \Gamma$, i.e. system \eqref{y}, converges to a point in the set $\im\{ \I_r \}$ if and only if every trajectory $\tilde z(t)=col( \tilde p(t),  \tilde q(t))$ of \eqref{y2} defined on $\Omega$ converges to the origin. 
\end{lemma}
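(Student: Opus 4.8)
The plan is to establish the equivalence in two directions, using the relations between the original system \eqref{y}, the shifted system \eqref{y2}, and the unique equilibrium $\bar z = col(\bar p, \bar q)$ on the invariant set where $q \in \im(\md B^T)$.

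\textbf{From convergence of $\tilde z$ to output consensus of $y$.} First I would recall the reduction from Corollary \ref{copy}: every trajectory $z(t)$ of $\Sigma \times \Gamma$ is a constant shift (in the $\im(W^{-1}\md C)$ direction of the $q$-component) of a trajectory $z^*(t)$ with $q^*(0) \in \im(\md B^T)$, and the two trajectories have identical outputs $y(t) = y^*(t)$ since the shift lies in $\ker(M^{-1})$ restricted to the $q$-block — more precisely the shift only affects $q$, and $y$ depends only on $p$. Hence it suffices to prove the claim for trajectories with $q(0) \in \im(\md B^T)$. For such a trajectory, $\tilde z(t) = z(t) - \bar z$ is a trajectory of \eqref{y2} on $\Omega$ by construction of the shifted model. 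If $\tilde z(t) \to 0$, then $p(t) \to \bar p = M\I_r\beta$, so $y(t) = M^{-1}p(t) \to \I_r \beta \in \im\{\I_r\}$. This direction is essentially bookkeeping.

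\textbf{From output consensus of $y$ to convergence of $\tilde z$.} Conversely, suppose every $y(t)$ converges to a point in $\im\{\I_r\}$. Again restrict to $q(0) \in \im(\md B^T)$, so we work with $\tilde z(t) = col(\tilde p(t), \tilde q(t))$ on $\Omega$. Convergence of $y(t) = M^{-1}\tilde p(t) + M^{-1}\bar p$ to a point in $\im\{\I_r\}$ means $M^{-1}\tilde p(t)$ converges to some $w \in \im\{\I_r\}$, i.e. $\tilde p(t) \to Mw$ with $Mw \in \im\{M\I_r\}$. I must show this forces $\tilde p(t) \to 0$ and $\tilde q(t) \to 0$. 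The key point: $\tilde z(t)$ is bounded (Lemma \ref{stable}), so any limit point $\tilde z^\infty = col(\tilde p^\infty, \tilde q^\infty)$ of the trajectory is an equilibrium of the limiting behavior — and since $\dot{\tilde q} = \md B^T M^{-1} \tilde p$, if $\tilde p(t)$ converges to a constant $\tilde p^\infty$ then $\dot{\tilde q}(t) \to \md B^T M^{-1}\tilde p^\infty$, and for $\tilde q(t)$ to stay bounded we need $\md B^T M^{-1}\tilde p^\infty = 0$, hence $\tilde p^\infty \in \im\{M\I_r\}$. Then $\dot{\tilde q}(t) \to 0$; combined with the fact that $\tilde q$ lies in a bounded orbit on $\Omega$, one argues $\tilde q(t)$ converges, say to $\tilde q^\infty$, and $\tilde z^\infty$ is then a genuine equilibrium of \eqref{y2} on $\Omega$. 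By the uniqueness of the equilibrium on $\Omega$ (Proposition, via $\bar{\tilde z} = 0$), $\tilde z^\infty = 0$. Since the trajectory is bounded with the origin as its only limit point, it converges to the origin.

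\textbf{Main obstacle.} The delicate step is the inference ``$\tilde p(t) \to Mw$ and boundedness of $\tilde q(t)$ $\Rightarrow$ $\tilde p(t) \to 0$ and $\tilde q(t)$ converges.'' Just knowing $\tilde p(t)$ converges does not immediately give that $\tilde q(t)$ converges — $\tilde q$ could in principle keep oscillating while $\dot{\tilde q} = \md B^T M^{-1}\tilde p \to \md B^T M^{-1}(Mw)$; boundedness forces this limit to be zero, so $\dot{\tilde q}(t) \to 0$, but convergence of $\tilde q$ still needs an argument (e.g. via LaSalle applied to the bounded trajectory: its $\omega$-limit set is nonempty, compact, invariant, and contained in $\{\dot{\tilde q}=0\} \cap \{\tilde p = \tilde p^\infty\}$, on which the dynamics force $\tilde q$ to be stationary, and then uniqueness of the equilibrium collapses the $\omega$-limit set to $\{0\}$). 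I would therefore route the harder direction through LaSalle's invariance principle on the bounded trajectory rather than trying to argue convergence of $\tilde q$ by hand, using $U$ as the Lyapunov function and the uniqueness of the equilibrium on $\Omega$ to pin down the invariant set. One must also be careful that the statement quantifies over \emph{all} trajectories, so the $\omega$-limit argument must be applied uniformly, but since the origin is the unique equilibrium on $\Omega$ and all trajectories are bounded, this goes through without extra hypotheses.
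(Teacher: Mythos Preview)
Your argument is correct, and both directions go through. The easy direction ($\Leftarrow$) matches the paper exactly. For the harder direction ($\Rightarrow$) you take a genuinely different route from the paper: the paper invokes \emph{Barbalat's lemma}. From $p(t) \to p^*$ and boundedness of $\ddot p$ (via Lemma \ref{stable}), $\dot p$ is uniformly continuous, so Barbalat gives $\dot p(t) \to 0$ directly; then the algebraic relation $-RM^{-1}p - \md BWq + v \to 0$ forces $\md BWq(t)$ to converge, and the singleton property $\im(\md B^T) \cap [q^* + \ker(\md BW)] = \{q^*\}$ (Lemma \ref{bandc}) pins down the limit of $q$. You instead run an $\omega$-limit set argument: since $\tilde p(t) \to \tilde p^\infty$, every $\omega$-limit point has $\tilde p$-component $\tilde p^\infty$; invariance then forces $\dot{\tilde p}=0$ and (via boundedness of $\tilde q$) $\dot{\tilde q}=0$ along trajectories in the $\omega$-limit set, which together with the same singleton property collapses the set to one equilibrium, hence to $0$. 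Both routes rely on exactly the same three ingredients (boundedness from Lemma \ref{stable}, injectivity of $\md BW$ on $\im(\md B^T)$, uniqueness of the equilibrium), so neither is more general; the paper's Barbalat step is slightly more direct and avoids the detour through invariant sets, while your approach is more structural and sidesteps the uniform-continuity bookkeeping. One minor remark: your final suggestion to ``route through LaSalle with $U$'' is unnecessary and slightly misleading --- LaSalle with $U$ only localizes the $\omega$-limit set to $\{\dot U = 0\}$, which is not a singleton; the argument that actually works is the one you sketched first, using only that $\tilde p$ converges and the $\omega$-limit set is invariant.
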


\begin{proof}  In this proof, every convergence statement holds exclusively for $t \to \infty$.
$(\Rightarrow)$ Suppose that every output trajectory $y(t)$ of $\Sigma \times \Gamma$ converges to $\im\{ \I_r \}$, then this holds in particular for those trajectories generated with $q(0) \in \im(\md B^T)$. So $p(t) \to p^*$, where $p^* \in \im\{ M \I_r \}$. From Lemma \ref{stable}, we deduce that $\ddot{p}(t)$ is bounded, hence $\dot p(t)$ is uniformly continuous and we can apply Barbalat's lemma to conclude that $\dot p(t) \to 0$. That gives $-RM^{-1}p(t)-\md BW  q(t)+v \to 0$. So $\md BWq(t)$ converges too and consequently, $q(t) \to q^* + \ker(\md BW)$ for some $q^* \in \R^m$. Since $q(t) \in \im(\md B^T)$, we have that $q(t) \to \im(\md B^T) \cap \haakk{q^* + \ker(\md BW)}$, which is a singleton, so $q(t)$ converges too and consequently, the whole state $z(t)$ converges. Uniqueness of the equilibrium implies that $z(t) \to \bar z$ and hence $\tilde z(t) \to 0$.
$(\Leftarrow)$ For every trajectory $\tilde z(t) \to 0$ we have $z(t) \to \bar z$, yielding $y(t) \to \bar y=M^{-1}\bar p =\I_r \beta$, with $\beta$ as in \eqref{beta}.
\end{proof}

Lemma \ref{qook} shows that output consensus in the long run of the system $\Sigma \times \Gamma$ is equivalent to global asymptotic stability (GAS) of the system $\eqref{y2}$ defined on $\Omega$ and we will interchangeably use both terms.

Now we use LaSalle's invariance principle, which is a necessary condition for the long-run behavior: as $t$ goes to infinity, the trajectory converges to the largest invariant set in the set of states where $\dot U=0$. 

\begin{lemma}\label{pede} Every trajectory $\tilde z^*(t)$ of \eqref{y2} defined on $\Omega$ converges to $\mc S^{LS}$, which we define to be the set of initial conditions $\tilde z(0)$ such that for all $t \geq 0$, $\tilde p_d(t)=0$ and $\tilde p_u(t) \in \ker(R_u M_u^{-1})$, where the trajectory $\tilde z(t)=col(\tilde p(t), \tilde q(t))$ is the solution of \eqref{y2} with initial condition $\tilde z(0)$. \end{lemma}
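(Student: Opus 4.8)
The plan is to apply LaSalle's invariance principle to the system \eqref{y2} defined on the invariant set $\Omega$. First I would note that Lemma \ref{stable} already establishes that every sublevel set $\{ \tilde z : U(\tilde p, \tilde q) \leq c \}$ is bounded (since $U$ is a positive definite quadratic form) and hence compact, and that $\Omega$ is invariant under the dynamics. Since $\dot U(\tilde p, \tilde q) = - \tilde p_d^T M_d^{-1} R_d M_d^{-1} \tilde p_d - \tilde p_u^T M_u^{-1} R_u M_u^{-1} \tilde p_u \leq 0$ everywhere, the hypotheses of LaSalle's theorem are met: every trajectory starting in $\Omega$ remains in a compact sublevel set and therefore converges to the largest invariant subset of $\{ \tilde z \in \Omega : \dot U = 0 \}$.

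Next I would identify the set $\{\dot U = 0\}$. Because $R_d \succ 0$ (all damped nodes have positive definite damping) and $M_d \succ 0$, the first quadratic form $\tilde p_d^T M_d^{-1} R_d M_d^{-1} \tilde p_d$ vanishes if and only if $\tilde p_d = 0$. For the undamped block, $R_u \succcurlyeq 0$ may be singular, so $\tilde p_u^T M_u^{-1} R_u M_u^{-1} \tilde p_u = 0$ if and only if $M_u^{-1} \tilde p_u \in \ker R_u$, equivalently $\tilde p_u \in \ker(R_u M_u^{-1})$. Hence $\{\dot U = 0\} = \{ \tilde z : \tilde p_d = 0,\ \tilde p_u \in \ker(R_u M_u^{-1}) \}$.

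Then I would take the largest invariant subset of this set. A trajectory $\tilde z(t)$ that stays in $\{\dot U = 0\}$ for all $t \geq 0$ is precisely one for which $\tilde p_d(t) = 0$ and $\tilde p_u(t) \in \ker(R_u M_u^{-1})$ for all $t$; the set $\mc S^{LS}$ in the statement is by definition the set of initial conditions $\tilde z(0)$ generating such trajectories, so $\mc S^{LS}$ is exactly this largest invariant set (it is automatically invariant, being defined by a condition quantified over all $t \geq 0$ and forward-shift of a trajectory in it stays in it). LaSalle's principle then gives $\tilde z^*(t) \to \mc S^{LS}$, which is the claim.

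The main obstacle, if any, is a presentational one rather than a mathematical one: LaSalle's invariance principle is usually stated for systems on all of $\R^N$ or on a compact positively invariant set, and here the dynamics is restricted to the affine-type invariant subspace $\Omega$. I would address this by observing that $\Omega$ is a closed linear subspace invariant under $\dot{\tilde z} = A\tilde z$, that $U$ restricted to $\Omega$ still has compact sublevel sets, and that the restriction of a linear flow to an invariant subspace is again a well-defined complete flow, so LaSalle applies verbatim on $\Omega$. One should also remark that the convergence here is convergence to the set $\mc S^{LS}$, not yet to a single point inside it — pinning down the limit point within $\mc S^{LS}$ (and deciding when $\mc S^{LS}$ forces $\tilde z = 0$, i.e. GAS) is the subject of the subsequent analysis, not of this lemma.
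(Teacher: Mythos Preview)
Your proposal is correct and follows essentially the same route as the paper: invoke LaSalle's invariance principle with the Lyapunov function $U$, identify $\{\dot U=0\}$ as the set where $\tilde p_d=0$ and $\tilde p_u\in\ker(R_uM_u^{-1})$, and observe that $\mc S^{LS}$ is by definition the largest invariant subset of this set. The paper's own proof is in fact terser than yours---it records only the characterization of $\{\dot U=0\}$ (via positive definiteness of $M_d^{-1}R_dM_d^{-1}$ and $\ker(M_u^{-1}R_uM_u^{-1})=\ker(R_uM_u^{-1})$) and leaves the LaSalle step to the sentence preceding the lemma; your additional remarks about compact sublevel sets and the restriction to the invariant subspace $\Omega$ are welcome clarifications but not a different argument.
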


\begin{proof} The set of points in the state space where $\dot U=0$ is, by positive definiteness of $M_d^{-1}R_dM_d^{-1}$ and by $\ker(M_u^{-1}R_uM_u^{-1})=\ker(R_uM_u^{-1})$, equal to the set of points in the state space where ${\tilde p}_d =0$ and $\tilde p_u \in \ker(R_uM_u^{-1})$.
 \end{proof}

\subsection{Behavior of the undamped nodes at steady state}

\begin{comment}
LaSalle's approach tells us that $\tilde p_d(t) \to 0$ as $t \to \infty$. By substituting ${\tilde p}_d(t)=0$ and $\dot{\tilde p}(t)=0$ in the dynamics of the damped nodes, $\dot{\tilde p}_d = -R_d M_d^{-1} \tilde p_d - \md B_d W \tilde q$, we derive that 
\eqq{
-\md B_dW \tilde q(t)=0 \qquad \forall t \geq 0
}{ac}
The physical interpretation of this constraint is very intuitive: it says that the net incoming force at the damped nodes must be identically zero to ensure that the damped nodes stay at rest. Identity \eqref{ac} acts as an algebraic constraint on the dynamics of the undamped nodes. As 
\end{comment}
In this subsection, we give a precise characterization of $\mc S^{LS}$ as defined in Lemma \ref{pede} and work towards an LTI system that is observable if and only if output consensus of $\Sigma \times \Gamma$ is achieved. We introduce the following terminology: denote by $Obs(C,A)$ the observability matrix associated with the pair $(C,A)$. In the following Lemma, $\mc S^{LS}$ is written as a linear transformation of the unobservable subspace of a linear time-invariant system that gives the steady-state behavior of the undamped nodes in the $(\tilde y_u,  \tilde s_u)$ coordinates. 

\begin{lemma}\label{bestbelangrijk} $\mc S^{LS} = \hat Q \ker(Obs(\hat C, \hat A))$, where
$$\hat Q= \bp 0 & 0 \\ M_u & 0 \\ 0 & \md B_u^T- \md B_d^T(\md L_d^d+\md L_d^i)^{-1}\md L_i^i \ep ,$$ 
\eq{
\hat C =  \bp  \md L_i^i & 0 \\ R_u & 0 \ep, \qquad
\hat A = \bp 0 &  -M_u^{-1} \tilde{\md L}_u \\ I & 0 \ep, 
}
\eqq{
\tilde{\md L}_u = (\md L_u^u+ \md L_u^i) - (\md L_i^i)^T (\md L_d^d+\md L_d^i)^{-1} \md L_i^i
}{tlu}
Here, the block rows of $\hat Q$ are decomposed according to the decomposition of $\tilde z(t) = col( \tilde p_d, \tilde p_u, \tilde q )$ and the block columns of $\hat Q$ according to the decomposition of $\hat A$. 
\end{lemma}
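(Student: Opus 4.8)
I would prove the two inclusions separately, the bridge in both directions being the second–order ``velocity/position'' reduction for the undamped block already used in Section~II, together with the elementary fact that $\ker(Obs(\hat C,\hat A))=\{\,x_0:\hat C e^{\hat A t}x_0=0\ \text{for all }t\ge 0\,\}$. Throughout I would use the block identities coming from the partition of $\md B$ and $W$, namely $\md B_d W\md B_d^T=\md L_d^d+\md L_d^i$, $\md B_d W\md B_u^T=\md L_i^i$, $\md B_u W\md B_d^T=(\md L_i^i)^T$, $\md B_u W\md B_u^T=\md L_u^u+\md L_u^i$, and the invertibility of $\md L_d^d+\md L_d^i$ (equivalently, well–posedness of the Schur complement $\tilde{\md L}_u$ in \eqref{tlu}): since $\mc G$ is connected and $V_u\ne\emptyset$, every connected component of $\mc G_d$ is incident to an interconnecting edge, so $\md L_d^d+\md L_d^i=\md B_d^d W^d(\md B_d^d)^T+\md B_d^i W^i(\md B_d^i)^T\succ 0$.

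\textbf{($\subseteq$)} Let $\tilde z(\cdot)$ solve \eqref{y2} with $\tilde z(0)\in\mc S^{LS}$; by Lemma~\ref{pede}, $\tilde p_d(t)\equiv 0$ and $R_uM_u^{-1}\tilde p_u(t)\equiv 0$. Differentiating $\tilde p_d\equiv 0$ in the first block row of \eqref{y2} yields the algebraic constraint $\md B_d W\tilde q(t)\equiv 0$, while the third block row reduces to $\dot{\tilde q}=\md B_u^T M_u^{-1}\tilde p_u$. Since $\tilde q(0)\in\im(\md B^T)$ on $\Omega$, I pick $\tilde s_0=col(\tilde s_{d,0},\tilde s_{u,0})$ with $\tilde q(0)=\md B_d^T\tilde s_{d,0}+\md B_u^T\tilde s_{u,0}$, set $\tilde v_u:=M_u^{-1}\tilde p_u$, $\tilde s_u(t):=\tilde s_{u,0}+\int_0^t\tilde v_u(\tau)\,d\tau$ and $\tilde s_d(t):=\tilde s_{d,0}$ (constant, since $\dot{\tilde s}_d=M_d^{-1}\tilde p_d\equiv 0$), and verify $\tilde q(t)=\md B_d^T\tilde s_d(t)+\md B_u^T\tilde s_u(t)$ by a one–line uniqueness argument. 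Expanding $\md B_d W\tilde q\equiv 0$ gives $(\md L_d^d+\md L_d^i)\tilde s_{d,0}+\md L_i^i\tilde s_u(t)\equiv 0$; hence $\md L_i^i\tilde s_u(t)$ is constant, so $\md L_i^i\tilde v_u(t)\equiv 0$, and $\tilde s_{d,0}=-(\md L_d^d+\md L_d^i)^{-1}\md L_i^i\tilde s_u(t)$ for all $t$. Substituting $\tilde q=\md B_d^T\tilde s_d+\md B_u^T\tilde s_u$ into the $\tilde p_u$–equation and using $R_uM_u^{-1}\tilde p_u\equiv 0$ produces $M_u\dot{\tilde v}_u=-\tilde{\md L}_u\tilde s_u$, which with $\dot{\tilde s}_u=\tilde v_u$ is exactly $col(\dot{\tilde v}_u,\dot{\tilde s}_u)=\hat A\,col(\tilde v_u,\tilde s_u)$. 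The two constraints $\md L_i^i\tilde v_u\equiv 0$ and $R_u\tilde v_u=R_uM_u^{-1}\tilde p_u\equiv 0$ say $\hat C\,col(\tilde v_u(t),\tilde s_u(t))\equiv 0$, so $col(\tilde v_u(0),\tilde s_u(0))\in\ker(Obs(\hat C,\hat A))$; and $\tilde p_d(0)=0$, $\tilde p_u(0)=M_u\tilde v_u(0)$, $\tilde q(0)=(\md B_u^T-\md B_d^T(\md L_d^d+\md L_d^i)^{-1}\md L_i^i)\tilde s_{u,0}$ give $\tilde z(0)=\hat Q\,col(\tilde v_u(0),\tilde s_u(0))$.

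\textbf{($\supseteq$)} Conversely, given $col(\tilde v_u(0),\tilde s_u(0))\in\ker(Obs(\hat C,\hat A))$, propagate it by $\hat A$ and set $\tilde z(t):=\hat Q\,col(\tilde v_u(t),\tilde s_u(t))$, i.e. $\tilde p_d\equiv 0$, $\tilde p_u=M_u\tilde v_u$, $\tilde q=(\md B_u^T-\md B_d^T(\md L_d^d+\md L_d^i)^{-1}\md L_i^i)\tilde s_u$. Then $\tilde q(t)=\md B_d^T\big(-(\md L_d^d+\md L_d^i)^{-1}\md L_i^i\tilde s_u(t)\big)+\md B_u^T\tilde s_u(t)\in\im(\md B^T)$, so $\tilde z(t)\in\Omega$. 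Reading off $\md L_i^i\tilde v_u\equiv 0$ and $R_u\tilde v_u\equiv 0$ from $\hat C\,col(\tilde v_u,\tilde s_u)\equiv 0$ and using the four matrix identities, one checks $\md B_d W\tilde q\equiv 0$ (so the first block row of $\dot{\tilde z}=A\tilde z$ holds, both sides being zero), $\dot{\tilde p}_u=M_u\dot{\tilde v}_u=-\tilde{\md L}_u\tilde s_u=-R_uM_u^{-1}\tilde p_u-\md B_u W\tilde q$ (second block row), and $\dot{\tilde q}=(\md B_u^T-\md B_d^T(\md L_d^d+\md L_d^i)^{-1}\md L_i^i)\tilde v_u=\md B_u^T\tilde v_u=\md B_u^T M_u^{-1}\tilde p_u$ (third block row). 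By uniqueness of solutions of \eqref{y2} this is the trajectory from $\tilde z(0)$, and it satisfies $\tilde p_d\equiv 0$, $R_uM_u^{-1}\tilde p_u=R_u\tilde v_u\equiv 0$, so Lemma~\ref{pede} gives $\tilde z(0)\in\mc S^{LS}$.

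\textbf{Main obstacle.} The delicate point is the forward direction: the constraint $\md B_d W\tilde q\equiv 0$ is not merely instantaneous — once $\tilde q$ is written through potentials and differentiated, it forces the extra relation $\md L_i^i\tilde v_u\equiv 0$, which is precisely the first block row of $\hat C$ beyond the obvious row $R_u$. This is what makes $\hat Q$ restricted to $\ker(Obs(\hat C,\hat A))$ map \emph{onto} $\mc S^{LS}$ rather than merely into it, and it explains why $\hat C$ has two block rows. Getting this bookkeeping right, and carefully tracking which quantities are constant versus time–varying when passing between $(\tilde p,\tilde q)$ and $(\tilde v_u,\tilde s_u,\tilde s_d)$, is the crux; the rest is the matrix algebra listed above plus the standard unobservable–subspace identity.
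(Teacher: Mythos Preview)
Your proposal is correct and follows essentially the same route as the paper's proof: both directions pass through the potential variables $(\tilde v_u,\tilde s_u)$ (the paper writes $\tilde y_u$ for your $\tilde v_u$), derive the algebraic constraint $\md B_dW\tilde q\equiv 0$ from $\tilde p_d\equiv 0$, extract $\md L_i^i\tilde v_u\equiv 0$ from it, and identify the reduced dynamics with $\hat A$ via the Schur complement $\tilde{\md L}_u$; the reverse inclusion is the same block-by-block verification. Your write-up even supplies the justification for the invertibility of $\md L_d^d+\md L_d^i$, which the paper uses tacitly.
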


\begin{proof}
$(\sbq)$ Take $\tilde z(0) \in \mc S^{LS}$ and consider the trajectory $\tilde z(t) = e^{At} \tilde z(0) \in \Omega$, which is a solution to \eqref{y2}. Decompose $\tilde z$ as $col(\tilde p_d, \tilde p_u, \tilde q)$, then we have for all $t \geq 0$: $\tilde p_d(t)=0$, $\tilde p_u(t) \in \ker(R_uM_u^{-1})$, $\tilde q(t) \in \im(B^T)$ and
 $$\bp \dot{ \tilde p}_d \\ \dot{\tilde p}_u \\ \dot{\tilde q} \ep = \bp -R_dM_d^{-1} & 0 & -\md B_dW \\ 0 & -R_uM_u^{-1} & -\md B_uW \\ \md B_d^T M_d^{-1} & \md B_u^T M_u^{-1} & 0 \ep \bp { \tilde p}_d \\ {\tilde p}_u \\ {\tilde q} \ep$$
From $\tilde p_d \equiv 0$, it also follows that $\dot{\tilde p}_d \equiv 0$. Also, $-R_uM_u^{-1}\tilde p_u \equiv 0$. Substituting these results in the dynamics, we obtain that for all $t \geq 0$, $-\md B_dW \tilde q(t) = 0$ and 
\eqq{
\bp \dot{\tilde p}_u \\ \dot{\tilde q} \ep = \bp 0 & - \md B_uW \\ \md B_u^TM_u^{-1} & 0 \ep \bp \tilde p_u \\ \tilde q \ep
}{tored}
Define the function $\tilde y_u(t):=M_u^{-1}\tilde p_u(t)$ and the auxiliary function $\tilde y_d(t) := M_d^{-1} \tilde p_d(t) \equiv 0$. It follows directly that
\eqq{
R_u \tilde y_u(t) \equiv 0
}{ru}
Since $\tilde q \in \im(\md B^T)$, there exist initial positions $\tilde s_d(0)$ and $\tilde s_u(0)$ satisfying 
\eqq{
\tilde q(0) = \md B_d^T \tilde s_d(0) + \md B_u^T \tilde s_u(0)
}{qq}
Now, define the functions $\tilde s_u(t) := \int_0^t \tilde y_u(\tau) d \tau + \tilde s_u(0)$ and $\tilde s_d(t) := \int_0^t \tilde y_d(\tau) d \tau + \tilde s_d(0) \equiv \tilde s_d(0)$. From $\dot{\tilde q}(t) = \md B_u^T \tilde y_u(t)$, we have $\tilde q(t) = \int_0^t \md B_u^T \tilde y_u(\tau) d \tau + \tilde q(0)$, which is easily shown to satisfy $\tilde q(t)=\md B_d^T \tilde s_d(t) + \md B_u^T \tilde s_u(t)$ for all $t \geq 0$. 

From $\md B_dW \tilde q(t) \equiv 0$, it follows that $-\md B_dW\md B_d^T \tilde s_d(t) = \md B_dW\md B_u^T \tilde s_u(t)$, which, after exploiting the decomposition of $\md L$ as in \eqref{el}, results in 
\eqq{
\tilde s_d(t) \equiv -(\md L_d^d+\md L_d^i)^{-1} \md L_i^i \tilde s_u(t)
}{sdd} Also, from $-\md B_dW \tilde q(t) \equiv 0$, it follows that $-\md B_dW \dot{\tilde q}(t) = -\md B_dW \md B_u^T M_u^{-1} \tilde p_u(t)\equiv 0$, or equivalenlty, 
\eqq{
-\md L_i^i \tilde{ y}_u(t) \equiv 0
}{nul}
The first row of the dynamics \eqref{tored} can now be rewritten in the new variables: 
$$ M_u \dot{\tilde y}_u(t) = -\md B_uW(\md B_d^T \tilde s_d(t) + \md B_u^T \tilde s_u(t))$$
By construction, $\dot{\tilde s}_u(t) = \tilde y_u(t)$. Using \eqref{sdd} and \eqref{tlu}, the dynamics of $\tilde s_u$ and $\tilde y_u$ satisfy 
\eqq{
\bp \dot{\tilde y}_u \\ \dot{\tilde s}_u \ep = \bp 0 & -M_u^{-1} \tilde{\md L}_u \\ I & 0 \ep \bp \tilde y_u \\ \tilde s_u \ep
}{dyna}
From \eqref{ru}, \eqref{nul} and \eqref{dyna}, it follows immediately that $col(\tilde y_u(t),  \tilde s_u(t) ) \in \ker(Obs(\hat C, \hat A))$ for all $t \geq 0$, which holds in particular for $t=0$. Finally, by combining \eqref{qq} and \eqref{sdd} for $t=0$, we see that $\tilde q(0)=(\md B_u^T-\md B_d^T(\md L_d^d+\md L_d^i)^{-1} \md L_i^i) \tilde s_u(0)$. We conclude that
\eqq{
\bp \tilde p_d(0) \\ \tilde p_u(0) \\ \tilde q(0) \ep  \in \hat Q \ker(Obs(\hat C, \hat A))
}{itis}
$(\spq)$ Take $col(\tilde y_u(0), \; \tilde s_u(0) ) \in \ker(Obs(\hat C, \hat A))$ and consider the trajectory $col(\tilde y_u(t), \; \tilde s_u(t) ) = e^{\hat A t} col(\tilde y_u(0), \tilde s_u(0) )$. We show that the trajectory $\tilde z(t)=$
\eqq{
\bp \tilde p_d(t) \\ \tilde p_u(t) \\ \tilde q(t) \ep  := \ub{ \bp 0 & 0 \\ M_u & 0 \\ 0 & \md \md B_u^T-\md B_d^T(\md L_d+\md K_d)^{-1}\md L_i \ep}{\hat Q}  \bp \tilde y_u(t) \\  \tilde s_u(t) \ep
}{newtraj}
is included in $\tilde{\mc B}^{LS}$. As $\tilde p_d(t)=0$ and $\tilde q(t) \in \im(\md B^T)$ for all $t \geq 0$, it remains to show that $\tilde z(t) = A \tilde z(t)$ for all $t \geq 0$.  Note that $\md B_dW \tilde q(t) =  \md L_i^i \tilde s_u(t)  - \md L_i^i \tilde s_u(t)=0$. Therefore, 
\eqq{
\dot{\tilde p}_d(t) = 0 = \ub{- R_dM_d^{-1} \tilde p_d(t)}{=0}  - \ub{\md B_dW \tilde q(t)}{=0}
}{x1}
By assumption, $R_uM_u^{-1} \tilde p_u \equiv 0$. Furthermore, from $\tilde q(t)=(\md B_u^T-\md B_d^T(\md L_d^d+\md L_d^i)^{-1}\md L_i^i)\tilde s_u(t)$, it follows that  $\md B_uW \tilde q(t)= \tilde{\md L}_u \tilde s_u(t)$. Also, $\dot{\tilde y}_u(t) = - \tilde M_u^{-1} \tilde{\md L}_u \tilde s_u(t)$, hence: 
\eqq{
\dot{\tilde p}_u(t) = M_u \dot{\tilde y}_u(t) = \ub{-R_uM_u^{-1} \tilde p_u(t)}{=0}- \md B_uW \tilde q(t)
}{x2}
By assumption, $\md L_i^i \tilde y_u(t) = \md L_i^i \dot{\tilde s}_u(t) \equiv 0$. As a consequence, 
$\dot{\tilde q}(t) = \md B_u^T \dot{\tilde s}_u(t) = \md B_u^T \tilde y_u(t) = \md B_u^T M_u^{-1} \tilde p_u(t)$ and therefore
\eqq{
\dot{\tilde q}(t) = \ub{\md B_d^TM_d^{-1}\tilde p_d(t)}{=0} + \md B_u^T M_u^{-1} \tilde p_u(t)
}{x3}
Taking together \eqref{x1}, \eqref{x2}, \eqref{x3}, we have 
\eq{
 \ub{\bp \dot{\tilde p}_d \\ \dot{\tilde p}_u \\ \dot{\tilde q} \ep}{\dot{\tilde z}} &= \ub{\bp -R_d M_d^{-1} & 0 & -\md B_dW 
\\ 0 & -R_uM_u^{-1} & -\md B_uW \\ \md B_d^TM_d^{-1} & \md B_u^TM_u^{-1} & 0  \ep}{A} \ub{\bp {\tilde p}_d \\ {\tilde p}_u \\ {\tilde q} \ep}{{\tilde z}} 
}
\end{proof}

$\mc S^{LS}$ can also be written as the unobservable subspace of the reduced system that gives the dynamics of the undamped nodes written in the $(\tilde p, \tilde q)$ coordinates: $\mc S^{LS} = \breve Q \ker(Obs(\breve C, \breve A))$, with
\eq{
\breve Q= \bp 0 & 0 \\ I & 0 \\ 0 & I \ep,
}
\eq{
\breve C =  \bp   0 & \md B_d W \\ R_uM_u^{-1} & 0 \\ 0& \md C^T  \ep, \; \;
\breve A = \bp 0 & -\md B_uW \\ \md B_u^TM_u^{-1} & 0 \ep
}
Here, the row and column decomposition of $\breve C$ and $\breve A$ are such that the first block is in accordance with $\tilde p_u$ and the second with $\tilde q$.  

{\bf Remark.} 
\bi
\item The state trajectories $col(\tilde y_u, \tilde s_u)$ of the system $(\hat C, \hat A)$ describe the behavior of the undamped nodes in the reduced graph with total Laplacian matrix $\tilde{\md L}_u$, which is obtained by eliminating the damped nodes according to a Kron reduction. This changes the topology including edge weights, but connectivity is preserved, hence $\ker(\tilde{\md L}_u)=\im( \I_r )$. $\hat Q$ represents the transformation matrix of the $(\tilde y_u(t), \tilde s_u(t))$ coordinates to the $(\tilde p_d(t), \tilde p_u(t), \tilde q_u(t))$ coordinates at steady state. Furthermore, $\md L^i_i \tilde y_u=0$ is an algebraic constraint that boils down to $\md B_d W \tilde q(t)=0$, i.e. zero net force at damped nodes in the original graph. Finally, the constraint $R_u \tilde y_u=0$ assures that partially undamped nodes can only move in directions in which they do not experience resistance. 
\item The system $(\breve C, \breve A)$ gives the dynamics of the undamped nodes when the damped nodes would be fixed at a single position. The set of solutions of $(\breve C, \breve A)$ for which $\md B_dW \tilde q(t)\equiv 0$, $R_uM_u^{-1} \tilde p_u(t) \equiv 0$ and $\tilde q(t) \in \im(\md B^T)$ for all $t \geq 0$ is equal to the set of long-run trajectories of $\tilde z$ of the system \eqref{y2} defined on $\Omega$ in which every solution is premultiplied by $\breve Q$.  
\ei

We show that all solutions in $\mc S^{LS}$ are composed of periodic functions, which are in fact sinusoids:

\begin{proposition}\label{nwg}  Each solution $\tilde z(t) \in \Omega$ of system \eqref{y2} can be written as a finite sum of sinusoids. 
\end{proposition}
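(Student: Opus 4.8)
The plan is to reduce the claim, via Lemma~\ref{bestbelangrijk}, to an analysis of the matrix exponential of
$$\hat A = \bp 0 & -M_u^{-1}\tilde{\md L}_u \\ I & 0 \ep .$$
By that lemma a solution $\tilde z(t)\in\mc S^{LS}$ is of the form $\tilde z(t)=\hat Q\,e^{\hat A t}w_0$ for some real $w_0\in\ker(Obs(\hat C,\hat A))$, so, $\hat Q$ being a constant matrix, it suffices to show that $\hat Q\,e^{\hat A t}w_0$ is a finite sum of sinusoids for every such $w_0$. I would decompose $w_0$ into its components in the generalized eigenspaces of $\hat A$, treat the part at nonzero eigenvalues and the part at the eigenvalue $0$ separately, and then push each part through $\hat Q$.

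For the eigenstructure of $\hat A$, note first that $M_u^{-1}\tilde{\md L}_u$ is diagonalizable with real nonnegative eigenvalues: $\md L_d^d+\md L_d^i$ is the damped-node principal block of the total Laplacian $\md L=\md BW\md B^T\succcurlyeq0$ and is positive definite (nonsingular because $\mc G$ is connected and $V_u\neq\emptyset$, so that $\bp\md B_d^d&\md B_d^i\ep$ has full row rank), hence $\tilde{\md L}_u$ in \eqref{tlu} is a Schur complement of $\md L\succcurlyeq0$ and therefore symmetric positive semidefinite; thus $M_u^{-1}\tilde{\md L}_u$ is similar to $M_u^{-1/2}\tilde{\md L}_uM_u^{-1/2}\succcurlyeq0$. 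Writing $\mu_1,\dots,\mu_\ell\geq0$ for its distinct eigenvalues with multiplicities $d_1,\dots,d_\ell$, a block-determinant computation gives $\det(\lambda I-\hat A)=\det(\lambda^2 I+M_u^{-1}\tilde{\md L}_u)=\prod_j(\lambda^2+\mu_j)^{d_j}$, so the spectrum of $\hat A$ is $\{\pm i\sqrt{\mu_j}\}$. For each $\mu_j>0$ the eigenvalues $\pm i\sqrt{\mu_j}$ are purely imaginary and semisimple, since $b\mapsto\bp\pm i\sqrt{\mu_j}\,b\\b\ep$ carries the $\mu_j$-eigenspace of $M_u^{-1}\tilde{\md L}_u$ isomorphically onto $\ker(\hat A\mp i\sqrt{\mu_j}I)$, matching algebraic and geometric multiplicities. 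Hence the part of $e^{\hat A t}w_0$ coming from these eigenvalues is, after grouping complex-conjugate pairs, a finite sum of terms $\cos(\sqrt{\mu_j}\,t)\,a_j+\sin(\sqrt{\mu_j}\,t)\,b_j$ with $a_j,b_j$ real, and left-multiplication by $\hat Q$ preserves this sinusoidal form.

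The remaining — and only delicate — case is $\mu_j=0$, where $\hat A$ has a genuine $2\times2$ Jordan block: on $\ker\tilde{\md L}_u$ one has $\hat A\bp0\\v\ep=0$ but $\hat A\bp v\\0\ep=\bp0\\v\ep$, so a component of $w_0$ along a vector $\bp v\\0\ep$ would produce a term linear in $t$ in $e^{\hat A t}w_0$. The key step I would carry out is to show that $\hat Q$ annihilates exactly this secular direction, so that no polynomial growth survives in $\tilde z(t)$. Since the Kron reduction preserves connectivity, $\ker\tilde{\md L}_u=\im\{\I_r\}$ (the all-ones direction on the undamped nodes, as noted after Lemma~\ref{bestbelangrijk}); writing such a vector as $\I_r c$ and using that $\md L\,\I_r=0$ has damped block $(\md L_d^d+\md L_d^i)\,\I_r+\md L_i^i\,\I_r=0$, so that $(\md L_d^d+\md L_d^i)^{-1}\md L_i^i\,\I_r=-\I_r$, I would compute
$$\big(\md B_u^T-\md B_d^T(\md L_d^d+\md L_d^i)^{-1}\md L_i^i\big)\I_r=\md B_u^T\I_r+\md B_d^T\I_r=\md B^T\I_r=0 ,$$
the last equality being $\ker(\md B^T)=\im\{\I_r\}$. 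Consequently $\hat Q\bp0\\v\ep=0$ and $\hat Q\,\hat A\bp v\\0\ep=\hat Q\bp0\\v\ep=0$ for every $v\in\ker\tilde{\md L}_u$; since $\hat A^2$ vanishes on the generalized $0$-eigenspace, $e^{\hat A t}$ restricted there equals $I+t\hat A$, so $\hat Q\,e^{\hat A t}$ is \emph{constant} in $t$ on that subspace. Assembling the two cases, $\tilde z(t)=\hat Q\,e^{\hat A t}w_0$ is a finite sum of terms $\cos(\sqrt{\mu_j}\,t)\,\hat Q a_j+\sin(\sqrt{\mu_j}\,t)\,\hat Q b_j$ over $\mu_j>0$ plus a constant vector, which — consistently with the boundedness of $\tilde z(t)$ from Lemma~\ref{stable} — I would count as a zero-frequency sinusoid; this proves the proposition.

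The main obstacle, as the organization above indicates, is the $\mu_j=0$ bookkeeping: one must rule out secular (polynomial-in-$t$) growth, and there is no abstract reason why $\ker(Obs(\hat C,\hat A))$ should avoid the generalized eigenvector $\bp v\\0\ep$, so the explicit cancellation showing that $\hat Q\,\hat A$ vanishes on the generalized $0$-eigenspace is the clean route. A minor, cosmetic point is the convention that a constant vector is admitted as a zero-frequency sinusoid; if one prefers to avoid this, the constant term can instead be absorbed by noting that its $\tilde p_u$-component lies in $\ker(R_uM_u^{-1})$, consistent with the definition of $\mc S^{LS}$ in Lemma~\ref{pede}.
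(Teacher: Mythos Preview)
Your argument is correct, but it takes a different and longer route than the paper. The paper works not with $\hat A$ but with the alternative representation $\breve A=\bp 0 & -\md B_uW\\ \md B_u^TM_u^{-1} & 0\ep$ mentioned just after Lemma~\ref{bestbelangrijk}, and observes that
\[
\breve A=\underbrace{\bp 0 & -\md B_u\\ \md B_u^T & 0\ep}_{\breve B}\underbrace{\bp M_u^{-1} & 0\\ 0 & W\ep}_{\breve W}
\]
is similar to the real skew-symmetric matrix $\breve W^{1/2}\breve B\,\breve W^{1/2}$. This single similarity gives at once that $\breve A$ is diagonalizable with a purely imaginary spectrum, so $e^{\breve A t}$ is a finite sum of sinusoids with no further case analysis. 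Your choice of $\hat A$ forces you to confront the genuine $2\times2$ Jordan block at $0$ (coming from $\ker\tilde{\md L}_u=\im\{\I_r\}$), and you resolve it correctly by the explicit computation that $\hat Q$ annihilates the secular direction; this is sound, but it is work that the $\breve A$ factorization sidesteps entirely. The trade-off is that your route stays within the $(\hat C,\hat A)$ formalism of Lemma~\ref{bestbelangrijk} and makes transparent exactly where the all-ones direction is absorbed, whereas the paper's route is a two-line structural argument at the price of switching to the $(\tilde p_u,\tilde q)$ coordinates. As a side remark, you could also have avoided the $\hat Q$-cancellation step by noting that $\bp\I_r c\\0\ep\notin\ker(\hat C)$ since $\md L_i^i\I_r=-(\md L_d^d+\md L_d^i)\I_r\neq0$, so the generalized-eigenvector direction never appears in $\ker(Obs(\hat C,\hat A))$ to begin with.
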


\begin{proof} Note that $\breve A$ can be written as a product of a skew-symmetric matrix and a positive-definite diagonal matrix. Indeed, we have 

\[ \breve A= \underbrace{\bp 0 & -\md B_u \\ \md B_u^T  & 0 \ep}_{\breve B} \underbrace{\bp M_u^{-1} & 0 \\ 0 & W \ep}_{\breve W} \] 

Since $\breve A= \breve B \breve W$ is similar to the real skew-symmetric matrix $\breve W^{\frac{1}{2}} \breve B \breve W^{\frac{1}{2}}$, it has purely imaginary eigenvalues that are semisimple\footnote{A real skew-symmetric matrix is a normal matrix, which has the property to be diagonalizable.}. Take any solution $\tilde z(\cdot) \in \Omega$ of \eqref{y2}, then $\tilde p_d(t) \equiv 0$ and the trajectories $col(\tilde p_u(t), \tilde q(t) ) = e^{\breve{A} t} col(\tilde p_u(0),  \tilde q(0) )$ are composed of periodic functions of the form $\cos( \mathfrak I(\lambda_i) t)  \mathfrak R(x_i)+\sin( \mathfrak I(\lambda_i) t) \mathfrak I(x_i)$, where $x_i$ is an eigenvector of $\hat A$ and $\lambda_i$ is a semisimple, purely imaginary eigenvalue. \end{proof}

Each component $\cos( \mathfrak I(\lambda_i) t)  \mathfrak R(x_i)+\sin( \mathfrak I(\lambda_i) t) \mathfrak I(x_i)$ of $\tilde z(t)$ corresponds to a group of nodes that is oscillating with the same frequency $\mathfrak I(\lambda_i)$.  The nodes in this group are indicated by the non-zero entries in $x_i$. If undamped nodes belong to multiple oscillating groups, they might oscillate with multiple frequencies. 

Due to this periodic character of the components of the solutions, we cannot find a proper subset of $\mc S^{LS}$ to which all solutions converge. Hence,

\begin{corollary}\label{smallest} The smallest set to which all solutions of \eqref{y2} in $\Omega$ converge is given by $\mc S^{LS}$. 
\end{corollary}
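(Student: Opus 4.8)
\emph{Proof plan.} The plan is to exploit that $\mc S^{LS}$ already attracts every solution of \eqref{y2} in $\Omega$ (Lemma~\ref{pede}) and to show that it cannot be shrunk: I would prove that every point of $\mc S^{LS}$ is an $\omega$-limit point of some solution of \eqref{y2} in $\Omega$, so that no closed proper subset of $\mc S^{LS}$ can still attract all solutions. As a preliminary I would observe that $\mc S^{LS}$ is forward invariant under \eqref{y2} --- this is immediate from the definition of $\mc S^{LS}$ in Lemma~\ref{pede} together with time invariance, since the solution issued from $\tilde z(s)$ is the shift $\tilde z(s+\,\cdot\,)$ and therefore again satisfies $\tilde p_d\equiv 0$ and $\tilde p_u\in\ker(R_uM_u^{-1})$ --- and that $\mc S^{LS}\subseteq\Omega$, because by Lemma~\ref{bestbelangrijk} the $\tilde q$-component of any element of $\mc S^{LS}$ lies in $\im(\md B_u^T)+\im(\md B_d^T)=\im(\md B^T)$.

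Next I would fix $x\in\mc S^{LS}$ and let $\tilde z_x$ denote the solution of \eqref{y2} with $\tilde z_x(0)=x$. By the preliminary step $\tilde z_x(t)\in\mc S^{LS}\subseteq\Omega$ for all $t\ge 0$; in particular $\tilde p_d\equiv 0$ and $R_uM_u^{-1}\tilde p_u\equiv 0$ along $\tilde z_x$, so the argument behind Proposition~\ref{nwg} applies verbatim and $\tilde z_x$ is a finite sum of sinusoids (equivalently $col(\tilde p_u(t),\tilde q(t))=e^{\breve A t}col(\tilde p_u(0),\tilde q(0))$ with $\breve A$ similar to a real skew-symmetric matrix, hence with semisimple purely imaginary eigenvalues). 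A finite sum of sinusoids is recurrent: it is periodic when its frequencies are commensurable, and otherwise it returns arbitrarily close to its initial value along a diverging sequence of times by Kronecker's density theorem on the torus generated by the frequencies. Hence there is $t_k\to\infty$ with $\tilde z_x(t_k)\to x$, i.e.\ $x$ belongs to the $\omega$-limit set of $\tilde z_x$.

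Finally I would conclude as follows: if $\mc S\subseteq\mc S^{LS}$ is any closed set to which every solution of \eqref{y2} in $\Omega$ converges, then evaluating the convergence of $\tilde z_x$ along the sequence $t_k$ above gives $d(x,\mc S)=\lim_k d(\tilde z_x(t_k),\mc S)=0$, so $x\in\mc S$; since $x\in\mc S^{LS}$ was arbitrary, $\mc S=\mc S^{LS}$, and together with Lemma~\ref{pede} this proves that $\mc S^{LS}$ is the smallest such set. The main obstacle is the recurrence claim for finite sums of sinusoids --- classical, but it must be invoked with some care when the frequencies are incommensurable --- together with the related subtlety that ``smallest'' must here be read as ``smallest closed set'' (equivalently, the union of the $\omega$-limit sets of all solutions of \eqref{y2} in $\Omega$): any subset that is merely dense in $\mc S^{LS}$ would otherwise also attract every solution, so closedness cannot be dropped.
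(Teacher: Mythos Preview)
Your proof is correct and follows essentially the same line as the paper, which simply remarks after Proposition~\ref{nwg} that ``due to this periodic character of the components of the solutions, we cannot find a proper subset of $\mc S^{LS}$ to which all solutions converge.'' Your version is a more careful elaboration of precisely that idea --- you correctly replace ``periodic'' by ``recurrent'' via Kronecker's theorem to handle incommensurable frequencies, and you make explicit that ``smallest'' must mean ``smallest closed set,'' both points the paper leaves implicit.
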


From Lemma \ref{stable} and the periodic character of $\tilde z(t)$ at steady state, the solutions of \eqref{y2} in $\Omega$ are bounded. This has an important implication: the sum of the momenta of the undamped nodes turn out to be zero: 

\begin{proposition}\label{conservation} {(Conservation of momentum at steady state)} For any solution $\tilde z(t) = col( \tilde p_d(t), \tilde p_u(t), \tilde q(t) )$ of \eqref{y2} in $\Omega$, it holds that $\I_r^T \tilde p_u(t) \equiv 0$. 
\end{proposition}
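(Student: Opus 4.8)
The plan is to establish a conservation law along steady-state trajectories and then use the boundedness already available to force the conserved vector to vanish. Fix a steady-state solution of \eqref{y2}, i.e.\ a trajectory $\tilde z(t)=col(\tilde p_d(t),\tilde p_u(t),\tilde q(t))$ with $\tilde z(0)\in\mc S^{LS}$ (the relevant case by Lemma~\ref{pede} and Corollary~\ref{smallest}), so that $\tilde p_d(t)\equiv 0$ and $R_u M_u^{-1}\tilde p_u(t)\equiv 0$ for all $t\ge 0$.

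\textbf{Step 1 (conservation).} Since each column of the incidence matrix sums to zero, $\I_r^T\md B=0$ (with $\I_r=\one\otimes I_r$ of the appropriate size in each occurrence), so along any solution of \eqref{y2} one has $\frac{d}{dt}(\I_r^T\tilde p)=\I_r^T\dot{\tilde p}=-\I_r^T R M^{-1}\tilde p-\I_r^T\md B W\tilde q=-\I_r^T R M^{-1}\tilde p$. On $\mc S^{LS}$ the right-hand side vanishes because $R_d M_d^{-1}\tilde p_d=0$ and $R_u M_u^{-1}\tilde p_u=0$, while the left-hand side equals $\frac{d}{dt}(\I_r^T\tilde p_u)$ because $\tilde p_d\equiv 0$. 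Hence $\I_r^T\tilde p_u(t)\equiv c$ for some fixed $c\in\R^r$, and it only remains to prove $c=0$.

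\textbf{Step 2 (vanishing of the conserved value).} Let $\tilde s_u(t),\tilde y_u(t),\tilde s_d(t)$ be the auxiliary functions produced in the proof of Lemma~\ref{bestbelangrijk}; then $\tilde p_u=M_u\tilde y_u$, $\dot{\tilde s}_u=\tilde y_u$, $\tilde q(t)=\md B_d^T\tilde s_d+\md B_u^T\tilde s_u(t)$, and $\tilde s_d$ is constant (since $\tilde y_d=M_d^{-1}\tilde p_d\equiv 0$). By Lemma~\ref{stable} the state $\tilde z(t)$ is bounded, hence so is $\tilde q(t)$, and therefore $\md B_u^T\tilde s_u(t)=\tilde q(t)-\md B_d^T\tilde s_d$ is bounded. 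Now $\md B_u^T$ is injective: if $\md B_u^Tw=0$, then $col(0,w)\in\ker\md B^T=\im(\I_r)$, and as the damped block $\one_{n_d}\otimes\xi$ of this vector must vanish we get $\xi=0$, hence $w=0$; this step uses exactly the standing assumption that $\mc G$ is connected and $n_d\ge 1$. Injectivity gives a constant $\kappa>0$ with $\|\tilde s_u(t)\|\le\kappa\|\md B_u^T\tilde s_u(t)\|$, so $\tilde s_u(t)$ — and hence $\I_r^T M_u\tilde s_u(t)$ — is bounded. But differentiating, $\frac{d}{dt}(\I_r^T M_u\tilde s_u)=\I_r^T M_u\tilde y_u=\I_r^T\tilde p_u=c$ by Step 1, so $\I_r^T M_u\tilde s_u(t)=\I_r^T M_u\tilde s_u(0)+ct$; a bounded function of this affine form must have $c=0$. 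Thus $\I_r^T\tilde p_u(t)\equiv c=0$.

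The crux is Step 2: Step 1 alone only pins the undamped momentum to a fixed vector, which physically is the momentum of a possible uniform drift of the entire undamped subnetwork past the pinned damped nodes. Excluding it is inherently global: it rests on the energy-type bound of Lemma~\ref{stable} applied to $\tilde q$ together with the injectivity of $\md B_u^T$, which is precisely where connectivity of $\mc G$ and the presence of a damped node enter. (Alternatively, one can argue spectrally via $\breve A$ as in Proposition~\ref{nwg}: once $\ker\md B_u^T=\{0\}$, the kernel of $\breve A$ has zero $\tilde p_u$-component, so $\tilde p_u(t)$ is a finite sum of sinusoids of nonzero frequency, and then $\I_r^T\tilde p_u$ being constant forces it to be zero.)
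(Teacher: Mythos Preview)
Your proof is correct and follows essentially the same two-step strategy as the paper's (commented-out) proof: first show $\I_r^T\tilde p_u$ is constant along steady-state trajectories using $\I_r^T\md B=0$ together with the $\mc S^{LS}$ conditions, then rule out a nonzero constant by combining the Lyapunov boundedness of $\tilde q$ with the injectivity of $\md B_u^T$ (which both you and the paper deduce from connectivity and $n_d\ge 1$). The only cosmetic difference is in Step~2: the paper decomposes $\tilde p_u$ along $\im(\tilde{\md L}_u)\oplus\im(\I_r)$, integrates to $\tilde q$, and applies the pseudoinverse $(\md B_u^T M_u^{-1})^\dagger$ to isolate a term linear in $t$, whereas you work directly with the position variable $\tilde s_u$ from Lemma~\ref{bestbelangrijk} and observe that $\I_r^T M_u\tilde s_u$ is an affine function of $t$ that must stay bounded. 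Your route is slightly cleaner since it avoids the pseudoinverse bookkeeping; the underlying contradiction is identical.
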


\begin{comment}
\begin{proof} Since $\I_r^T\md B=0$, it follows that $\I_r^T \dot{\tilde p}_u(t)=-\I_r^T \md B_uW \tilde q(t)=\I_r^T \md B_dW \tilde q(t) \equiv 0$. Hence, $\I_r^T \tilde p_u(t) \equiv K$, for some constant vector $K \in \R^r$. Suppose for the sake of contradiction that $K \neq 0$. Decompose the solution $\tilde p_u(t)$ into components in $\im(\tilde {\md L}_u)$ and $\im(\I_r)$: $\tilde p_u(t) = \tilde{\md L}_u \tilde g(t) + \I_r K \fone{n_u}$, which must hold for some function $\tilde g(t)$. Then $\tilde q$ can be expressed as 
\eq{
 \tilde q(t)- \tilde q(0)\isen \int_0^t \dot{\tilde q}(\tau) d\tau= \int_0^t \md B_u^T M_u^{-1}{\tilde p}_u(\tau) d\tau \issa 
  \int_0^t \md B_u^T M_u^{-1}\tilde{\md L}_u \tilde g(\tau)d\tau + \md B_u^T M_u^{-1} \I_r K \frac{t}{n_u}
  }
Since $n_d, n_u \geq 1$, $\md B_u^T$ has full column rank: there exists a (partially) undamped node, which we label as $k$, that is adjacent to a damped node. Now, define $E_k=e_k \otimes I_r$, where $e_k$ denotes the $k$'th unit vector. Then $\ker(\md B_u^T)=\ker((\md B_u^u)^T) \cap \ker((\md B_u^i)^T)  \sbq \im(\I_r) \cap \ker(E_k^T) = \{0\}$. Thus, $\md B_u^T$ has indeed full column rank and as a consequence, $(\md B_u^T M_u^{-1})^\dagger \md B_u^T M_u^{-1}=I_{n_ur}$. But then $ \I_r^T(\md B_u^T M_u^{-1})^\dagger (\tilde q(t) - \tilde q(0))  = Kt $, which is unbounded and contradicts the boundedness of $\tilde q(t)$. Therefore, $K=0$ and the result follows. 
\end{proof}
\end{comment}

In the $(\tilde y_u, \tilde s_u)$ coordinates, we find that conservation of momentum leads to $\I_r^T M_u \tilde y_u(t) \equiv 0$. Thus, $\I_r^T M_u \tilde y_u(t)$ might serve as an additional output variable to the system $(\hat C, \hat A)$ that does not affect the unobservable subspace. What is more, the same holds for its integral $\I_r^T M_u \tilde s_u(t)$ so that $\mc S^{LS}$ can be written equivalently as follows:

\begin{corollary}\label{bestbelangrijk2} $\mc S^{LS}= \hat Q \ker(Obs(\doublehat{C}, \hat A))$, where
\eq{
\doublehat{C} \isen  \bp  \md L_i^i & 0 \\ R_u & 0 \\ 0 & \I_r^T M_u \ep
}
\end{corollary}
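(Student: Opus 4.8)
The plan is to build on Lemma~\ref{bestbelangrijk}, which already gives $\mc S^{LS} = \hat Q\ker(Obs(\hat C,\hat A))$, and to show that appending the row block $\bp 0 & \I_r^T M_u \ep$ to $\hat C$ enlarges the set of observability constraints only along directions that $\hat Q$ annihilates. One inclusion is for free: the rows of $\hat C$ are among those of $\doublehat{C}$, so $\ker(Obs(\doublehat{C},\hat A)) \sbq \ker(Obs(\hat C,\hat A))$ and hence $\hat Q\ker(Obs(\doublehat{C},\hat A)) \sbq \hat Q\ker(Obs(\hat C,\hat A)) = \mc S^{LS}$. For the reverse inclusion it suffices to prove that every $x_0 \in \ker(Obs(\hat C,\hat A))$ can be corrected by a vector of $\ker\hat Q$ so that the correction lies in $\ker(Obs(\doublehat{C},\hat A))$.

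First I would trace the new output along trajectories. Fix $x_0 = col(\tilde y_u(0),\tilde s_u(0)) \in \ker(Obs(\hat C,\hat A))$, set $col(\tilde y_u(t),\tilde s_u(t)) = e^{\hat A t}x_0$, and let $\phi(t) := \I_r^T M_u \tilde s_u(t)$. From $\dot{\tilde s}_u = \tilde y_u$ and $\dot{\tilde y}_u = -M_u^{-1}\tilde{\md L}_u \tilde s_u$ one gets $\dot\phi = \I_r^T M_u \tilde y_u$ and $\ddot\phi = -\I_r^T \tilde{\md L}_u \tilde s_u = 0$, since $\tilde{\md L}_u$ is symmetric and Kron reduction preserves connectivity, so $\tilde{\md L}_u \I_r = 0$; hence $\phi$ is affine in $t$. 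By Lemma~\ref{bestbelangrijk} we have $\hat Q x_0 \in \mc S^{LS} \sbq \Omega$, and along the corresponding solution $\hat Q e^{\hat A t}x_0$ of \eqref{y2} (cf. the construction in the proof of Lemma~\ref{bestbelangrijk}) the momentum block satisfies $\tilde p_u(t) = M_u \tilde y_u(t)$; Proposition~\ref{conservation} then yields $\I_r^T M_u \tilde y_u(t) \equiv 0$, i.e. $\dot\phi \equiv 0$. Therefore $\phi(t) \equiv \phi(0) = \I_r^T M_u \tilde s_u(0)$, so the appended output $\bp 0 & \I_r^T M_u \ep e^{\hat A t}x_0$ vanishes identically precisely when $\I_r^T M_u \tilde s_u(0) = 0$. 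In other words, $\ker(Obs(\doublehat{C},\hat A)) = \ker\psi$, where $\psi$ denotes the linear functional $x_0 \mapsto \I_r^T M_u \tilde s_u(0)$ on $\ker(Obs(\hat C,\hat A))$.

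Next I would produce a complement of $\ker\psi$ inside $\ker(Obs(\hat C,\hat A))$ that lies in $\ker\hat Q$. Take $W := \{\, col(0,\I_r\alpha) : \alpha \in \R^r \,\}$. Because $\tilde{\md L}_u \I_r = 0$, every element of $W$ is an equilibrium of $\hat A$, and it is annihilated by $\hat C$ (whose second block column is zero), so $W \sbq \ker(Obs(\hat C,\hat A))$; on $W$ the map $\psi$ reads $\alpha \mapsto \I_r^T M_u \I_r \alpha$, a bijection onto $\R^r$ since $\I_r^T M_u \I_r = \sum_{i \in V_u} M_i \succ 0$. Consequently $\ker(Obs(\hat C,\hat A)) = \ker(Obs(\doublehat{C},\hat A)) \oplus W$. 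Finally $W \sbq \ker\hat Q$: with $\Phi := \md B_u^T - \md B_d^T(\md L_d^d+\md L_d^i)^{-1}\md L_i^i$ one has $\hat Q\, col(0,\I_r\alpha) = col(0,0,\Phi\I_r\alpha)$, and from $\md L\I_r = 0$, whose damped block reads $(\md L_d^d+\md L_d^i)\I_r + \md L_i^i\I_r = 0$, we get $(\md L_d^d+\md L_d^i)^{-1}\md L_i^i\I_r = -\I_r$, hence $\Phi\I_r = \md B_u^T\I_r + \md B_d^T\I_r = \md B^T\I_r = 0$. Thus $\hat Q\ker(Obs(\hat C,\hat A)) = \hat Q\ker(Obs(\doublehat{C},\hat A)) + \hat Q W = \hat Q\ker(Obs(\doublehat{C},\hat A))$, and together with Lemma~\ref{bestbelangrijk} this gives $\mc S^{LS} = \hat Q\ker(Obs(\doublehat{C},\hat A))$.

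The step I expect to be the main obstacle, and the reason the statement cannot just be ``the extra output leaves the unobservable subspace unchanged'', is that appending $\bp 0 & \I_r^T M_u \ep$ genuinely shrinks that subspace: the constant-position modes $W$ become observable. The proof goes through only because this loss occurs exactly along $W \sbq \ker\hat Q$, so the image under $\hat Q$ is untouched; the two small facts carrying the argument are $\Phi\I_r = 0$ (hence $W \sbq \ker\hat Q$) and the nonsingularity of $\I_r^T M_u \I_r$ (hence $\psi|_W$ is onto), both quick but not skippable. Everything else is the two-derivative identity $\ddot\phi = 0$ combined with a direct appeal to Proposition~\ref{conservation}, which itself rests on boundedness of the steady-state trajectories (Lemma~\ref{stable}).
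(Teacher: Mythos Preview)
Your proof is correct and follows essentially the same idea as the paper's: both exploit the gauge freedom $\tilde s_u \mapsto \tilde s_u - \I_r\alpha$, the invertibility of $\I_r^T M_u \I_r$, and Proposition~\ref{conservation} to absorb the extra output constraint. The paper picks a specific representative $\tilde s_u^*(0)=\tilde s_u(0)-\I_r\alpha$ with $\alpha=(\I_r^T M_u\I_r)^{-1}\I_r^T M_u\tilde s_u(0)$ and checks directly that it lands in $\ker(Obs(\doublehat{C},\hat A))$, whereas you package the same manoeuvre as a direct-sum decomposition $\ker(Obs(\hat C,\hat A))=\ker(Obs(\doublehat{C},\hat A))\oplus W$ together with $W\sbq\ker\hat Q$; the content is the same, your version just makes the underlying linear algebra (and the verification $\Phi\I_r=0$) more explicit.
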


\begin{proof} $(\sbq)$ Due to the freedom to choose an $\tilde s(0)$ that satisfies \eqref{qq}, we can choose one that satisfies $\I_r^T M_u \tilde s_u(0)=0$. To see that this is possible, consider solutions $\tilde s(t)=col( \tilde s_d(t),  \tilde s_u(t) )$ and $\tilde y_u(t)$ that satisfy \eqref{ru}, \eqref{qq}, \eqref{sdd}, \eqref{nul} and \eqref{dyna}. Replacing $\tilde s(t)$ by $\tilde s^*(t) = \tilde s(t) - \I_r \alpha$ with 
$$ \alpha = (\I_r^TM_u \I_r)^{-1} \I_r^TM_u \tilde s_u(0)$$ 
preserves these identities and furthermore $\I_r^T M_u \tilde s_u^*(0)=0$. Since $\I_r^T M_u \dot{\tilde s}^*_u(t)= \I_r^T M_u \dot{\tilde s}_u(t)=\I_r^T \tilde p_u(t) \equiv 0$, it holds that $\I_r^T M_u \tilde s_u^*(t) \equiv 0$. 
$(\spq)$ This follows from Lemma \ref{bestbelangrijk}: $\hat Q \ker(Obs(\doublehat{C}, \hat A)) \sbq \hat Q \ker(Obs({\hat C}, \hat A)) = {\mc S}^{LS}$
\qed\end{proof}

\begin{comment}

Now, since the connected components of the reduced graph defined by $\tilde L_u$ are behaving independently of each other, the eigenspace corresponding to $\lambda_i$ is a direct sum of basis vectors whose non-zero entries correspond to one connected component each. Therefore, by choosing $x_i$ appropriately, the nodes corresponding to the non-zero entries form a subset of a connected component of the reduced graph. Note that $\bp 0^T & \mathfrak R(x_i)^T \ep^T \in \mc S^{LS}$ (where the '0' part refers to the $\tilde p_d$ component) and since $\mc S^{LS}$ is a linear subspace, it is spanned by vectors of the form $\bp 0^T & \mathfrak R(x_i)^T \ep^T \in \mc S^{LS}$ with $x_i$ an eigenvector of $\breve A$ corresponding to a purely imaginary eigenvalue. Therefore, the dimension of $\mc S^{LS}$ equals the dimension of the sum of eigenspaces corresponding to purely imaginary eigenvalues of $\hat A$. 

\end{comment}

\subsection{Conditions on output consensus}

By combining Lemmas \ref{qook}, \ref{pede} and Corollaries \ref{smallest},  \ref{bestbelangrijk2}, we find that the pair $(\doublehat C,\hat A)$ is observable if and only output consensus is guaranteed:

\begin{proposition}\label{prop}
All output trajectories $ y(t)$ of $ \Sigma \times  \Gamma$ converges to a point in $\im\{ \I_r\}$ if and only if $\ker(Obs(\doublehat{C}, \hat A))=\{ 0 \}$.
\end{proposition}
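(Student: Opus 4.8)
The plan is to read the statement off the chain of results already in hand, and then to close one small gap about the matrix $\hat Q$. First I would assemble the chain: by Lemma \ref{qook}, output consensus of $\Sigma\times\Gamma$ holds iff every trajectory $\tilde z(t)$ of \eqref{y2} on $\Omega$ tends to the origin; by Lemma \ref{pede} every such trajectory tends to $\mc S^{LS}$, and by Corollary \ref{smallest} $\mc S^{LS}$ is the smallest set with that property, so — using $0\in\mc S^{LS}$ — ``every trajectory tends to $0$'' is equivalent to $\mc S^{LS}=\{0\}$; finally Corollary \ref{bestbelangrijk2} gives $\mc S^{LS}=\hat Q\ker(Obs(\doublehat C,\hat A))$. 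Stringing these together, output consensus $\iff\hat Q\ker(Obs(\doublehat C,\hat A))=\{0\}$. One direction of the Proposition is then immediate, since $\hat Q\{0\}=\{0\}$.

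The only substantial point left is the converse, $\hat Q\ker(Obs(\doublehat C,\hat A))=\{0\}\Rightarrow\ker(Obs(\doublehat C,\hat A))=\{0\}$, and for this I would show that $\hat Q$ is injective on the unobservable subspace, i.e.\ $\ker(\hat Q)\cap\ker(Obs(\doublehat C,\hat A))=\{0\}$. Write a domain element of $\hat Q$ as $col(\tilde y_u,\tilde s_u)$. From the block form of $\hat Q$ — first block column $col(0,M_u,0)$ with $M_u\succ0$, second block column $col(0,0,T)$ with $T:=\md B_u^T-\md B_d^T(\md L_d^d+\md L_d^i)^{-1}\md L_i^i$ — one gets $\ker(\hat Q)=\{0\}\times\ker(T)$. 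Now set $\tilde s_d:=-(\md L_d^d+\md L_d^i)^{-1}\md L_i^i\tilde s_u$, so that $T\tilde s_u=\md B_d^T\tilde s_d+\md B_u^T\tilde s_u=\md B^T\,col(\tilde s_d,\tilde s_u)$; this is exactly the relation between $\tilde s_u$ and $\tilde q$ used in the proof of Lemma \ref{bestbelangrijk}, cf.\ \eqref{sdd}. Since $\mc G$ is connected, $\ker(\md B^T)=\im(\I_r)$, so $T\tilde s_u=0$ forces $col(\tilde s_d,\tilde s_u)=\I_r c$ for some $c\in\R^r$, hence $\tilde s_u=(\one_{n_u}\otimes I_r)c$. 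If moreover $v:=col(0,(\one_{n_u}\otimes I_r)c)$ lies in $\ker(Obs(\doublehat C,\hat A))$, then in particular $\doublehat C v=0$, and the third block row of this identity reads $(\one_{n_u}\otimes I_r)^TM_u(\one_{n_u}\otimes I_r)c=\bigl(\sum_{i\in V_u}M_i\bigr)c=0$; as $\sum_{i\in V_u}M_i\succ0$ this gives $c=0$ and $v=0$. So $\hat Q$ is injective on $\ker(Obs(\doublehat C,\hat A))$, and the converse — and hence the Proposition — follows.

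I expect this last injectivity step to be the main obstacle, and it is also the reason one must invoke Corollary \ref{bestbelangrijk2} rather than Lemma \ref{bestbelangrijk}. The map $\hat Q$ is genuinely \emph{not} injective: $\ker(\hat Q)=\{0\}\times\im(\one_{n_u}\otimes I_r)$ has dimension $r$, and for the pair $(\hat C,\hat A)$ this whole kernel direction already sits inside the unobservable subspace — a constant $\tilde s_u=(\one_{n_u}\otimes I_r)c$ with $\tilde y_u\equiv0$ solves the reduced dynamics and produces identically zero output through $\hat C$. It is precisely the additional output row $\I_r^TM_u\tilde s_u$ carried by $\doublehat C$ — admissible at steady state by conservation of momentum (Proposition \ref{conservation}), hence not altering $\mc S^{LS}$ — that annihilates this ambiguous direction and restores injectivity of $\hat Q$ on $\ker(Obs(\doublehat C,\hat A))$.
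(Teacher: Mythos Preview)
Your proof is correct and follows essentially the same route as the paper: both reduce the question to showing $\ker(\hat Q)\cap\ker(Obs(\doublehat C,\hat A))=\{0\}$, and both do so by identifying $\ker(\hat Q)\subseteq\{0\}\times\im(\one_{n_u}\otimes I_r)$ and then killing that direction with the extra output row $\I_r^TM_u$. The only cosmetic difference is in how that kernel is computed: the paper left-multiplies $\hat Q$ by $\bp 0&M_u^{-1}&0\\0&0&\md B_uW\ep$ to obtain $\mathrm{diag}(I,\tilde{\md L}_u)$ and uses $\ker(\tilde{\md L}_u)=\im(\I_r)$, whereas you write $T\tilde s_u=\md B^T\,col(\tilde s_d,\tilde s_u)$ and invoke $\ker(\md B^T)=\im(\I_r)$ directly --- the same fact seen one Schur complement earlier.
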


\begin{proof} From Lemma \ref{qook}, all output trajectories $ y(t)$ of $ \Sigma \times  \Gamma$ converge to a point in $\im\{ \I_r\}$ if and only if every trajectory $\tilde z(t) = col ( \tilde p(t), \tilde q(t))$ of \eqref{y2} defined on $\Omega$ converges to the origin. Since $\mc S^{LS}$ is the smallest set to which all state trajectories of \eqref{y2} defined on $\Omega$ converge, that is equivalent to $\mc S^{LS}=\hat Q \ker(Obs(\doublehat C, \hat A))=\{ 0 \}$. It follows immediately that $\ker(Obs(\doublehat C, \hat A))=\{ 0 \}$ results in $\mc S^{LS}=\{0\}$. Now suppose that $\mc S^{LS}=\hat Q \ker(Obs(\doublehat C, \hat A))=\{0\}$, i.e. $\ker(Obs(\doublehat C,\hat A)) \sbq \ker(\hat Q)$. Note that $\ker(Obs(\doublehat C, \hat A)) \sbq \ker \bp 0 & \I_r^T M_u \ep$ and 
\eq{
\ker(\hat Q) \sbq & \ker  \haakk{ \bp  0 & M_u^{-1} & 0 \\ 0 & 0 & \md B_uW  \ep \hat Q } \issa \ker \bp I & 0 \\ 0 & \tilde{\md L}_u \ep = \im \bp 0 \\ \I_r \ep
}
Hence, 
\eq{
 & \ker(\hat Q) \cap \ker(Obs(\doublehat{C}, \hat A))  \\ \sbq & 
 \;  \im \bp 0 \\ \I_r \ep \cap \ker \bp 0 & \I_r^T M_u \ep=\{0\}
}
Then, by assumption we obtain $\ker(Obs(\doublehat{C}, \hat A))=\{0\}$. \end{proof}

We come to the following equivalence relation that connects the output consensus problem with the eigenspaces of $M_u^{-1}\tilde{\md L}_u$ and $M^{-1}\md L$:

\vspace{3mm}

\begin{theorem}\label{t1} The following is equivalent:

\begin{enumerate}[(i)]
\item Every plant output trajectory $y(t)$ of $\Sigma \times \Gamma$ converges to a point in the set $\im\{\I_r\}$.
\item None of the eigenvectors of $M_u^{-1} \tilde{\md L}_u$ is contained in the intersection of the kernel of $\md{L}_i^i$ and the kernel of $R_u$, i.e. for each $\mu \in \sigma(M_u^{-1} \tilde{\md L}_u)$:
\eqq{\ker(M_u^{-1} \tilde{\md L}_u- \mu I) \cap \ker \bp \md L_i^i \\ R_u \ep = \{ 0\}}{ii}
\item Every eigenvector of $M^{-1} \md L$ in the kernel of $R$ has at least one nonzero value in an entry that corresponds to a damped node, i.e. for each $\mu \in \sigma(M^{-1}\md L)$: 
\eqq{\ker(M^{-1}\md L- \mu I) \cap  \ker(R) \cap \im \bp 0 \\ I_{n_ur} \ep =\{0\}}{iii}
\end{enumerate}
\end{theorem}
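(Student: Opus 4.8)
The plan is to prove the chain of equivalences $(i) \Leftrightarrow (ii) \Leftrightarrow (iii)$. The equivalence $(i) \Leftrightarrow (ii)$ is essentially a restatement of Proposition \ref{prop} together with the structure of $(\doublehat C, \hat A)$, so I would handle it first. By Proposition \ref{prop}, $(i)$ holds iff $\ker(Obs(\doublehat C, \hat A)) = \{0\}$. The key observation is that $\hat A$ has the block companion form $\bp 0 & -M_u^{-1}\tilde{\md L}_u \\ I & 0 \ep$, so that $\hat A^2 = \bp -M_u^{-1}\tilde{\md L}_u & 0 \\ 0 & -M_u^{-1}\tilde{\md L}_u \ep$. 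Because of this, a vector $col(a,b)$ lies in $\ker(Obs(\doublehat C,\hat A))$ iff both $a$ and $b$ lie in the unobservable subspace of the pair $\bigl( \bp \md L_i^i \\ R_u \\ \I_r^T M_u \ep ,\, M_u^{-1}\tilde{\md L}_u \bigr)$ acting on $\R^{rn_u}$ — here the $\I_r^T M_u$ row of $\doublehat C$ is automatically in the unobservable subspace of any $M_u^{-1}\tilde{\md L}_u$-invariant subspace meeting $\ker \md L_i^i \cap \ker R_u$, since $\ker(\tilde{\md L}_u) = \im(\I_r)$ and one checks $\I_r^T M_u \cdot \tilde{\md L}_u = 0$ forces the $\I_r^T M_u$ component to vanish on such subspaces once it vanishes initially; this is exactly the content of Corollary \ref{bestbelangrijk2}. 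Then, since $M_u^{-1}\tilde{\md L}_u$ is diagonalizable (it is similar to the symmetric matrix $M_u^{-1/2}\tilde{\md L}_u M_u^{-1/2}$), its unobservable subspace for an output map $C_0$ equals the span of those eigenvectors lying in $\ker C_0$. Hence $\ker(Obs(\doublehat C,\hat A)) = \{0\}$ iff no eigenvector of $M_u^{-1}\tilde{\md L}_u$ lies in $\ker \md L_i^i \cap \ker R_u$, which is $(ii)$.

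For $(ii) \Leftrightarrow (iii)$ I would work directly with the generalized eigenvalue problems. An eigenpair $(\mu, x)$ of $M^{-1}\md L$ with $x \in \ker R$ means $\md L x = \mu M x$ and $Rx = 0$; decompose $x = col(x_d, x_u)$. The condition in $(iii)$ asks that such an eigenvector cannot have $x_d = 0$. So suppose $x_d = 0$: the damped block of $\md L x = \mu M x$ reads $\md L_i^i x_u = \mu M_d x_d = 0$ (using the decomposition \eqref{el} with $x_d=0$, the off-diagonal block $\md L_i^i$ hits $x_u$ and the diagonal damped block hits $x_d=0$), i.e. $x_u \in \ker \md L_i^i$; and $Rx = 0$ with $x_d = 0$ gives $R_u x_u = 0$. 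The undamped block reads $(\md L_u^u + \md L_u^i) x_u = \mu M_u x_u$; but since $\md L_i^i x_u = 0$ and $\md L_d^i$ is block diagonal with $\ker \md L_d^i \supseteq \ker (\md B_d^i)^T$, one gets $(\md L_d^d + \md L_d^i)^{-1}\md L_i^i x_u = 0$, so $\tilde{\md L}_u x_u = (\md L_u^u + \md L_u^i)x_u$, whence $M_u^{-1}\tilde{\md L}_u x_u = \mu x_u$. Thus $x_u$ is an eigenvector of $M_u^{-1}\tilde{\md L}_u$ lying in $\ker \md L_i^i \cap \ker R_u$ — contradicting $(ii)$ unless $x_u = 0$, but $x_u=0$ and $x_d=0$ is impossible for an eigenvector. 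Conversely, given a nonzero $x_u \in \ker(M_u^{-1}\tilde{\md L}_u - \mu I) \cap \ker \md L_i^i \cap \ker R_u$, I would build $x = col(0, x_u)$ and verify it is an eigenvector of $M^{-1}\md L$ in $\ker R$: the damped block gives $\md L_i^i x_u = 0$ (holds), and since $\md L_i^i x_u = 0$ one has $\tilde{\md L}_u x_u = (\md L_u^u + \md L_u^i) x_u$, so the undamped block $(\md L_u^u + \md L_u^i) x_u = \mu M_u x_u$ holds; $Rx = col(0, R_u x_u) = 0$; and $x \in \im\bp 0 \\ I_{n_u r} \ep$. This gives a bijective-type correspondence between the two exceptional sets, establishing $(ii) \Leftrightarrow (iii)$.

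The main obstacle I anticipate is the careful bookkeeping in $(ii)\Leftrightarrow(iii)$ around the Schur complement: specifically, showing that $\md L_i^i x_u = 0$ actually forces $(\md L_d^d + \md L_d^i)^{-1} \md L_i^i x_u = 0$ and hence that the Kron-reduced Laplacian $\tilde{\md L}_u$ agrees with $\md L_u^u + \md L_u^i$ on such $x_u$ — this needs the definition \eqref{tlu} and invertibility of $\md L_d^d + \md L_d^i$ (which should be justified: it is a Laplacian of $\mc G_d$ plus a nonnegative diagonal loading $\md L_d^i$ that is nonzero on nodes adjacent to an interconnecting edge, and since $\mc G$ is connected with $n_d, n_u \geq 1$, at least one damped node is loaded, making the sum positive definite). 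A secondary subtlety is making precise the claim that the $\I_r^T M_u$ row of $\doublehat C$ can be dropped without changing the unobservable subspace, i.e. re-deriving Corollary \ref{bestbelangrijk2} at the level of eigenvectors: one must check $\I_r^T M_u$ annihilates every $\mu \neq 0$ eigenvector of $M_u^{-1}\tilde{\md L}_u$ automatically (true since such eigenvectors are $M_u$-orthogonal to $\ker \tilde{\md L}_u = \im \I_r$), and for the $\mu = 0$ eigenspace $\im \I_r$ the row $\I_r^T M_u$ is injective, so it contributes nothing new beyond what is already forced. Everything else is linear-algebraic routine: block-matrix multiplication, the companion-form structure of $\hat A$, and diagonalizability via symmetrization.
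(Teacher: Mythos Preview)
Your argument is correct, and for $(ii)\Leftrightarrow(iii)$ it is essentially identical to the paper's: both use that $\md L_i^i x_u=0$ makes the Schur-complement correction $(\md L_i^i)^T(\md L_d^d+\md L_d^i)^{-1}\md L_i^i x_u$ vanish, so $\tilde{\md L}_u x_u=(\md L_u^u+\md L_u^i)x_u$ and the eigenvalue problem lifts to $M^{-1}\md L$ on vectors supported on $V_u$. (Your side remark about $\ker\md L_d^i\supseteq\ker(\md B_d^i)^T$ is unnecessary here: once $\md L_i^i x_u=0$ you have $(\md L_d^d+\md L_d^i)^{-1}\md L_i^i x_u=0$ trivially.)

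For $(i)\Leftrightarrow(ii)$ you take a genuinely different route from the paper. The paper applies the Hautus test to $(\doublehat C,\hat A)$: for $\lambda\neq 0$ the second block row gives $\tilde y_u=\lambda\tilde s_u$, which collapses the $2n_ur$-dimensional Hautus system to the single eigenvector condition $M_u^{-1}\tilde{\md L}_u\tilde s_u=-\lambda^2\tilde s_u$ with $\md L_i^i\tilde s_u=R_u\tilde s_u=0$; the $\I_r^T M_u$ row is then shown to be redundant by premultiplying the first row by $\I_r^T M_u$, and $\lambda=0$ is disposed of separately. Your approach instead exploits the companion structure $\hat A^2=\mathrm{diag}(-M_u^{-1}\tilde{\md L}_u,-M_u^{-1}\tilde{\md L}_u)$ together with diagonalizability of $M_u^{-1}\tilde{\md L}_u$ (via similarity to $M_u^{-1/2}\tilde{\md L}_u M_u^{-1/2}$) to identify the unobservable subspace as the span of eigenvectors in $\ker\bp \md L_i^i\\ R_u\ep$. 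Both arguments arrive at the same place; the Hautus route is shorter and avoids the diagonalizability step, while your route is more structural and makes the role of the eigenvectors explicit from the outset.

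One point where your write-up is slightly loose: the claim that $col(a,b)\in\ker(Obs(\doublehat C,\hat A))$ iff ``both $a$ and $b$ lie in the unobservable subspace of $(\,\bp \md L_i^i\\ R_u\\ \I_r^T M_u\ep,\,M_u^{-1}\tilde{\md L}_u)$'' is not immediate, because the first two rows of $\doublehat C$ act only on the \emph{first} block, so at step $k=0$ they constrain $a$ but not $b$ directly; the constraint on $b$ enters only through $Tb$ (from $k=1$) together with the $\I_r^T M_u$ row killing the $\im(\I_r)$ component. The conclusion does hold once you use the $M_u$-orthogonality of the eigenspaces of $M_u^{-1}\tilde{\md L}_u$ (so that $\I_r^T M_u$ annihilates every $\mu\neq 0$ eigenvector and is injective on the $\mu=0$ eigenspace $\im(\I_r)$), which you correctly flag in the last paragraph; just be aware that this step needs to be spelled out rather than asserted.
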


\begin{proof} \textit{(i) $\iff$ (ii)} From Proposition \ref{prop}, condition (i) holds if and only if $\ker(Obs(\doublehat{C}, \hat A))=\{ 0 \}$. 
 According to Hautus lemma, that is equivalent to 

\[ \x{rank} \bp \hat A - \lambda I \\ \doublehat{C} \ep = 2n_u \qquad  \forall \lambda \in \mathbb{C} \]

Equivalently, from the rank-nullity theorem: $\forall \lambda \in \mathbb{C}$ it must hold that if

\begin{equation}\label{equagon} 
\bp -\lambda I & -M_u^{-1} \tilde{\md L}_u \\ I & - \lambda I\\  \md L_i^i & 0 \\ R_u & 0 \\ 0 & \I_r^TM_u \ep 
\bp  \tilde y_u \\ {\tilde{s}}_u \ep = 0 
\end{equation}

then $\tilde y_u=0$ and $\tilde{s}_u =0$. For $\lambda=0$, this implication always holds, since from the second block row it follows that ${\tilde y}_u=0$ and from the first block row, $\tilde s_u \in \im\{ \I_r \}$, which, combined with the bottom block row $\I_r^T M_u \tilde s_u=0$, yields $\tilde s_u=0$ (notice that $ \im( \I_r ) \cap \ker( \I_r^T M_u) =\{0\}$). 

So it remains to consider $\lambda \in \C \backslash \{0\}$, for which ${\tilde  y}_u= \lambda \tilde s_u$. Inserting this in the first block row yields $\lambda^2 {\tilde s}_u = - M_u^{-1} \tilde{\md L}_u  {\tilde s}_u$. Premultiplying by $\fone{\lambda^2} \md I_r^T M_u$ yields $\I_r^TM_u {\tilde s}_u =  -\frac{1}{\lambda^2} \I_r^T M_u M_u^{-1} \tilde{\md L}_u {\tilde y}_u=0$, hence the last block row is always satisfied if the block rows above are satisfied too. Thus, for all $\lambda \neq 0$, the only solutions of \eqref{equagon} are $\tilde y_u=0$, $\tilde s_u=0$ if and only if for all $\lambda \neq 0$,
\eqq{
 M_u^{-1}  \tilde{\md L}_u { \tilde s}_u  \isen \; -\lambda^2 {\tilde s}_u \\ 
\lambda \md L_i^i {\tilde s}_u \isen \; 0 \\
\lambda R_u \tilde s_u \isen \; 0
}{kkkw}

implies ${\tilde s}_u=0$ (and therefore also $\tilde y_u={\lambda} \tilde s_u=0$). That is, for any eigenvalue $\mu=-\lambda \in \sigma(M_u^{-1}\tilde{\md L}_u) \backslash \{0\}$, \eqref{ii} holds. Since for $\mu=0 \in \sigma(M_u^{-1}\tilde{\md L}_u)$, $\ker(M_u^{-1} \tilde{\md L}_u - \mu I) = \im(\I_r)$ and $\im(\I_r) \cap \ker(\md L_i^i)=\{0\}$\footnote{This holds since $\md L_i^i$ is a nonzero and nonpositive matrix}, \eqref{ii} always holds for $\mu=0$.
%none of its corresponding eigenvectors is contained in both the kernel of $\md L_i^i$ and the kernel of $R_u$

\begin{comment}
\textit{(2) $\iff$ (3)} Perform a change of coordinates in equation \eqref{kkkw} by substituting $\tilde x_u := M_u \tilde s_u$. After premultiplication of the first equation by $M_u$, we obtain 
\eqq{
 \tilde L_uM_u^{-1} { \tilde x}_u &= \fone{\lambda^2}  {\tilde x}_u \\ 
L_i M_u^{-1}{\tilde x}_u &= 0 
}{kkkk}
\end{comment}

\textit{(ii) $\iff$ (iii)} Write out $\tilde{\md L}_u$ in the left-hand side of the first equation in \eqref{kkkw} where $\lambda \neq 0$ and use the second constraint to obtain: 
\eq{
M_u^{-1}  \tilde{\md L}_u {\tilde s}_u  &= M_u^{-1} (\md L_u^u+\md L_u^i -  (\md L_i^i)^T(\md L_d^d+\md L_d^i)^{-1}\md L_i^i) {\tilde s}_u \\
&=  M_u^{-1}(\md L_u^u+\md L_u^i) {\tilde s}_u 
}
With this and the fact that $\ker(\md L_i^i)=\ker(M_d^{-1}\md L_i^i)$, the first two identities in \eqref{kkkw} are equal to 
\eq{ 
\bp M_d^{-1} & 0 \\ 0 & M_u^{-1} \ep \bp \md L_d^d+\md L_d^i & \md L_i^i \\  (\md L_i^i)^T & \md L_u^u+ \md L_u^i \ep \bp 0 \\ {\tilde s}_u \ep &=  - \lambda^2 \bp 0 \\ \tilde s_u \ep
}
The last identity in \eqref{kkkw} can be rewritten as
\eq{
\bp R_d & 0 \\ 0 & R_u \ep \bp 0 \\ \tilde s_u \ep = \bp 0 \\ 0 \ep
}
Hence, \eqref{ii} is true for any $\mu \in \sigma(M_u^{-1} \tilde{\md L}_u) \backslash \{0\}$  if and only if there does not exist an eigenvector of $M^{-1}\md L$ corresponding to a nonzero eigenvalue that is in the kernel of $R$ and of the form $col( 0, \; {\tilde s}_u)$. Also, \eqref{iii} is always true for $\mu=0$. Thus, the latter condition is equivalent to (iii).%the condition that every eigenvector of $M^{-1}\md L$ in the kernel of $R$ has at least one nonzero value in an entry corresponding to a damped node.  
\end{proof}

%The interesting aspect of condition (ii) of Theorem \ref{t1} is that the dimension of the consensus problem is reduced from the $2 r n_u$-dimensional system of $(\doublehat{C}, \hat A)$ to the $rn_u$-dimensional system of $\haak{M_u^{-1} \tilde{\md L}_u, \bp \md L_i^i \\ R_u \ep}$.  

We give the following corollary without proof:  

\begin{corollary}
By a change of coordinates, we can extend Theorem \ref{t1} with the following equivalent conditions: 
\begin{enumerate}[(i)]
\setcounter{enumi}{3}
\item None of the eigenvectors of $\tilde{\md L}_u M_u^{-1}$ is contained in the intersection of the kernels of $\md L_iM_u^{-1}$ and $R_u M_u^{-1}$. 
\item Every eigenvector of $\md L M^{-1}$ in the kernel of $RM^{-1}$ has at least one nonzero value in an entry that corresponds to a damped node.
\end{enumerate}
\end{corollary}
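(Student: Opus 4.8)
The plan is to derive (iv) from (ii) and (v) from (iii) of Theorem~\ref{t1} by the invertible changes of variable $w=M_u v$ and $w=Mv$ respectively, checking that every subspace appearing in \eqref{ii} and \eqref{iii} is carried \emph{covariantly} onto the corresponding subspace in (iv)/(v). First I would record the similarities $\tilde{\md L}_u M_u^{-1}=M_u(M_u^{-1}\tilde{\md L}_u)M_u^{-1}$ and $\md L M^{-1}=M(M^{-1}\md L)M^{-1}$. Since $M_u\succ 0$ and $M\succ 0$, these give $\sigma(\tilde{\md L}_u M_u^{-1})=\sigma(M_u^{-1}\tilde{\md L}_u)$ and $\sigma(\md L M^{-1})=\sigma(M^{-1}\md L)$, and for every $\mu$ in the respective spectrum
\[ \ker(\tilde{\md L}_u M_u^{-1}-\mu I)=M_u\ker(M_u^{-1}\tilde{\md L}_u-\mu I),\qquad \ker(\md L M^{-1}-\mu I)=M\ker(M^{-1}\md L-\mu I). \]
Likewise $\ker(\md L_i^i M_u^{-1})=M_u\ker(\md L_i^i)$, $\ker(R_u M_u^{-1})=M_u\ker(R_u)$ and $\ker(RM^{-1})=M\ker(R)$, each simply because $\md L_i^i M_u^{-1}w=0 \iff \md L_i^i(M_u^{-1}w)=0$, and similarly for the others.

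For \emph{(ii) $\iff$ (iv)}: an invertible linear map carries an intersection of subspaces to the intersection of their images, so for each $\mu$,
\[ \ker(\tilde{\md L}_u M_u^{-1}-\mu I)\cap\ker(\md L_i^i M_u^{-1})\cap\ker(R_u M_u^{-1}) = M_u\bigl(\ker(M_u^{-1}\tilde{\md L}_u-\mu I)\cap\ker(\md L_i^i)\cap\ker(R_u)\bigr), \]
which vanishes precisely when the bracket on the right does. Ranging over the common spectrum of $M_u^{-1}\tilde{\md L}_u$ and $\tilde{\md L}_u M_u^{-1}$ shows that \eqref{ii} holding for all $\mu$ is equivalent to statement (iv), the matrix denoted $\md L_i$ in (iv) being $\md L_i^i$.

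For \emph{(iii) $\iff$ (v)}: the same argument with $M$ in place of $M_u$ handles the eigenspace of $M^{-1}\md L$ and the kernel of $R$; the only extra point is the ``damped-node'' support condition. Because $M=diag(M_d,M_u)$ is block diagonal with respect to the partition $\{V_d,V_u\}$, a vector $v=col(v_d,v_u)$ satisfies $v_d=0$ if and only if $w=Mv$ does, that is $M\,\im\bp 0\\ I_{n_ur}\ep=\im\bp 0\\ I_{n_ur}\ep$. Hence $M$ maps the triple intersection in \eqref{iii} onto $\ker(\md L M^{-1}-\mu I)\cap\ker(RM^{-1})\cap\im\bp 0\\ I_{n_ur}\ep$, and requiring this to be $\{0\}$ for every $\mu\in\sigma(\md L M^{-1})$ is exactly the negation of the assertion in (v) that some eigenvector of $\md L M^{-1}$ lying in $\ker(RM^{-1})$ has all its damped-node entries zero.

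The computations are elementary linear algebra, so I do not expect a real obstacle; the one thing that needs genuine care is that \emph{all} of the subspaces in play transform covariantly, and in particular that it is the block-diagonal structure of $M$, not merely its invertibility, that keeps the damped/undamped splitting fixed under $w=Mv$. If one prefers, the same equivalences can be read off by literally rewriting the LTI system of Lemma~\ref{bestbelangrijk} and the Hautus computation of Theorem~\ref{t1} in the coordinates $\tilde x_u:=M_u\tilde s_u$ (resp. $x:=Ms$), which is presumably the ``change of coordinates'' alluded to in the statement.
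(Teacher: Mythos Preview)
Your argument is correct: the similarity $\tilde{\md L}_u M_u^{-1}=M_u(M_u^{-1}\tilde{\md L}_u)M_u^{-1}$ (and its full-graph analogue) together with the block-diagonality of $M$ is exactly what is needed, and you have checked carefully that every subspace in \eqref{ii} and \eqref{iii} is carried to the corresponding one in (iv) and (v). The paper itself gives no proof of this corollary---it is stated explicitly ``without proof,'' with only the phrase ``by a change of coordinates'' as a hint---so your write-up simply makes that hinted change of variables $w=M_u v$ (resp.\ $w=Mv$) explicit and verifies the details the paper omits.
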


%%%%%%%%%%%%% AAAAAAND IT'S GONE!!!!!

\begin{comment}
\section{CONCLUSIONS}

A conclusion section is not required. Although a conclusion may review the main points of the paper, do not replicate the abstract as the conclusion. A conclusion might elaborate on the importance of the work or suggest applications and extensions. 
\end{comment}

%\addtolength{\textheight}{-12cm}   % This command serves to balance the column lengths
                                  % on the last page of the document manually. It shortens
                                  % the textheight of the last page by a suitable amount.
                                  % This command does not take effect until the next page
                                  % so it should come on the page before the last. Make
                                  % sure that you do not shorten the textheight too much.

%%%%%%%%%%%%%%%%%%%%%%%%%%%%%%%%%%%%%%%%%%%%%%%%%%%%%%%%%%%%%%%%%%%%%%%%%%%%%%%%

%%%%%%%%%%%%%%%%%%%%%%%%%%%%%%%%%%%%%%%%%%%%%%%%%%%%%%%%%%%%%%%%%%%%%%%%%%%%%%%%

%%%%%%%%%%%%%%%%%%%%%%%%%%%%%%%%%%%%%%%%%%%%%%%%%%%%%%%%%%%%%%%%%%%%%%%%%%%%%%%%

\section{Conclusion}

We considered a class of qualitatively heterogeneous networked models that includes mass-spring-damper networks and studied the output consensus problem of determining whether the plant output trajectories converge to an agreement value. This problem can be tackled by performing a stability analysis of a shifted model of the network (Lemma \ref{qook}). If this system is not globally asymptotically stable (GAS), then some of the undamped nodes exhibit oscillatory behavior at steady state. The oscillation space can be obtained from the unobservable subspace of a reduced system that gives the dynamics of the undamped nodes at steady state (Lemma \ref{bestbelangrijk}). In a steady state, the nodes show conservation of momentum (Lemma \ref{conservation}). This brings us at a system that is observable if and only if output consensus is guaranteed (Lemma \ref{prop}). Alternatively, the consensus problem is equivalent to an eigenspace problem that  depends on the graph topology, the edge weights, the mass values and the resistance values of the undamped nodes, see Theorem \eqref{t1}. Since the results show that the topology plays a major role in determining GAS, an obvious topic for future research is to find sufficient conditions purely based on the topology of the graph. Such results can be helpful in e.g. the pinning control problem, where one is looking for a strategy to place a minimal number of damped nodes in order to ensure output consensus.


\begin{thebibliography}{99}

\bibitem{c2} M. B{\"u}rger, D. Zelazo and F. Allg{\"o}wer, "Hierarchical Clustering of Dynamical Networks Using a Saddle-Point Analysis", IEEE Transactions on Automatic Control, vol. 58, issue 1, pp. 113-124, 2013. 
\bibitem{c1} M. B{\"u}rger, D. Zelazo and F. Allg{\"o}wer, "Duality and Network Theory in Passivity-based Cooperative Control", Automatica, vol. 50, pp. 2051-2061, August 2014. 
\bibitem{seyboth} G.S. Seyboth, D.V. Dimarogonas, K.H. Johansson, P. Frasca, "On Robust Synchronization of Heterogeneous Linear Multi-Agent Systems with Static Couplings", Automatica, vol. 53, pp. 392-399, March 2015. 
\bibitem{grip} H.F. Grip, T. Yang, A. Saberi, A.A. Stoorvogel, "Output Synchronization for Heterogeneous Networks of Non-Introspective, Non-Right-Invertable Agents", IEEE American Contr. Conf., pp. 5791-5796, 2013.
\bibitem{olfati} R. Olfati-Saber, A. Fax, R.M. Murray, "Consensus and Cooperation in Networked Multi-Agent Systems", Proceedings of the IEEE, vol. 95, issue 1, pp. 215-233, 2007. 
\bibitem{zhao} J. Zhao, D.J. Hill, T. Liu, "Synchronization of Dynamical Networks with Nonidentical Nodes: Criteria and Control", IEEE Transactions on Circuits and Systems I: Regular Papers, vol. 58, issue 3, pp. 584-594, 2011.
\bibitem{arcak} M. Arcak, "Passivity as a Design Tool for Group Coordination", IEEE Transactions Automatic Control, vol. 52, issue 8, pp. 1380-1390, 2007. 
\bibitem{murguia} C. Murguia, R.H.B. Fey, H. Nijmeijer, "Partial Network Synchronization and Diffusive Dynamic Couplings",  Proceedings of the 19th IFAC World Congress, vol. 19, part 1, 2014.
\bibitem{bergenhill} D. Hill, A. Bergen, "Stability Analysis of Multimachine Power Networks with Linear Frequency Dependent Loads", IEEE Transactions on Circuit and Systems, vol. 29, issue 12, pp. 840-848, March 1982.
\bibitem{traffic}D. Helbing, "Traffic and related self-driven many-particle systems", Rev. Mod. Phys., vol. 73, pp. 1067-1141, December 2001
%\bibitem{water} F. D{\"o}tsch, J. Denzinger, H. Kasinger, B. Bauer, "Decentralized Real-time Control of Water Distribution Networks Using Self-organizing Multi-Agent Systems", 2010 4th IEEE International Conf. on Self-Adaptive and Self-Organizing Systems (SASO), pp. 223-232, 2010.
\bibitem{port} A. van der Schaft and D. Jeltsema, 
"Port-Hamiltonian Systems Theory: An Introductory Overview", 
Foundations and Trends in Systems and Control,
vol. 1, no. 2/3, pp. 173-378, 2014. 
\bibitem{kron} F. D{\"o}rfler, F. Bullo,
"Kron reduction of graphs with applications to electrical networks",
IEEE Transactions on Circuits and Systems I: Regular Papers, vol. 60, issue 1, pp. 150 - 163, 2013.
\bibitem{graphmatrices} S. Pirzada, "An introduction to graph theory", University Press Hyderabad, 2012. 
\bibitem{observability} G. Parlangeli and G. Notarstefano, "On the Reachability and Observability of Path and Cycle Graphs", IEEE Transactions on Automatic Control, vol. 57, pp. 743-748, 2012. 
\bibitem{polderman} J. W. Polderman and J. C. Willems, "Introduction to the Mathematical Theory of System
and Control: A Behavioral Approach", ISBN 0-387-98266-3, Springer-Verlag, New York, 1998.
\bibitem{port2} A. J. van der Schaft and B. M. Maschke, "Port-Hamiltonian Systems on Graphs", SIAM J. Control Optim., vol. 51(2), pp. 906-937.
\bibitem{rahmani} A. Rahmani, M. Ji, M. Mesbahi and M. Egerstedt, "Controllability of Multi-Agent Systems from a Graph-Theoretic Perspective", SIAM J. Control Optim., vol. 48(1), pp. 162-186.
\bibitem{chen} G. Chen, "Pinning Control and Synchronization on Complex Dynamical Networks", International Journal of Control, Automation, and Systems, vol. 12(2), pp. 221-230, 2014.
\bibitem{trip} S. Trip, M. B{\"u}rger, and C. De Persis, "An Internal Model Approach to Frequency Regulation in Inverter-based
Microgrids With Time-varying Voltages", IEEE Conf. on Dec. and Control, pp. 223-228, 2014.\end{thebibliography}
\end{document}